\documentclass[12pt, a4paper]{amsart}
\usepackage{amsmath}
\usepackage{geometry,amsthm,graphics,tabularx,amssymb,shapepar}
\usepackage{verbatim}
\usepackage{amscd}
\usepackage{bm}
\usepackage{xcolor}
\usepackage[all]{xypic}

\newcommand{\fn}{\mathfrak n}

\newcommand{\fh}{\mathfrak h}

\newcommand{\ot}{\otimes}

\newcommand{\CL}{\mathcal{L}}

\newcommand{\C}{\mathbb{C}}

\newcommand{\N}{\mathbb N}
\newcommand{\Z}{\mathbb Z}

\newcommand{\ba}{\begin {eqnarray}}
\newcommand{\ea}{\end {eqnarray}}
\newcommand{\baa}{\begin {eqnarray*}}
\newcommand{\eaa}{\end {eqnarray*}}
\newcommand{\be}{\begin {equation}}
\newcommand{\ee}{\end {equation}}
\newcommand{\bee}{\begin {equation*}}
\newcommand{\eee}{\end {equation*}}

\newcommand{\te}[1]{\textnormal{{#1}}}

\theoremstyle{Theorem}

\theoremstyle{Theorem}

\newtheorem{thm}{Theorem}[section]
\newtheorem{cort}[thm]{Corollary}
\newtheorem{lemt}[thm]{Lemma}
\newtheorem{prpt}[thm]{Proposition}
\newtheorem{thmt}[thm]{Theorem}

\theoremstyle{Theorem}

\theoremstyle{Theorem}

\theoremstyle{Plain}

\theoremstyle{Definition}

\newtheorem{dfnt}[thm]{Definition}

\def\({\left(}

\def\){\right)}

\def \<{{\langle}}
\def \>{{\rangle}}

\allowdisplaybreaks

\numberwithin{equation}{section}
\title[Local Weyl modules and skew Howe duality]{Local Weyl modules and skew Howe duality}

\author{Fulin Chen$^1$}
\address{School of Mathematical Sciences, Xiamen University,
 Xiamen, China 361005}
  \email{chenf@xmu.edu.cn}
\thanks{1. Partially supported by China NSF grant (No. 12471029) and Xiamen NSF grant (No. 3502Z202473005)}
\author{Xin Huang$^2$}
\address{School of Mathematics, Hefei University of Technology, Hefei, China 230009, Xiamen, China 361005}
 \email{xinhuang@hfut.edu.cn}
\thanks{2. Partially supported by China NSF grant (No. 12526548)}
 \author{Siyi Niu}
  \address{School of Mathematical Sciences, Xiamen University,
 Xiamen, China 361005}
  \email{niusiyi@stu.xmu.edu.cn}
\author{Shaobin Tan$^3$}
\address{School of Mathematical Sciences, Xiamen University,
 Xiamen, China 361005}
 \email{tans@xmu.edu.cn}
 \thanks{3. Partially supported by China NSF grant (No. 12131018)}
 
\subjclass[2020]{17B10 \& 17B65} \keywords{Local Weyl module, skew Howe duality, affine Lie algebra,
Takiff algebra}
\begin{document}

\begin{abstract}
The  skew $(\mathfrak{gl}_{n}, \mathfrak{gl}_{r})$ Howe duality  states that the exterior algebra $\Lambda(\mathbb{C}^{nr})$ admits a multiplicity-free decomposition under the natural actions of $\mathfrak{gl}_{n}\times \mathfrak{gl}_{r}$.  In this paper, 
by using certain  Lagrange interpolation polynomials of degree $r-1$, we extend the action of $\mathfrak{gl}_{n}$ on $\Lambda(\C^{nr})$  to its loop algebra $L(\mathfrak{gl}_{n})$. 
View $\Lambda(\C^{nr})$ as a module for the loop algebra $L(\mathfrak{sl}_{n})$ of $\mathfrak{sl}_{n}$ by taking restriction.  We prove that every highest weight vector of $\mathfrak{gl}_{n}\times \mathfrak{gl}_{r}$ in $\Lambda(\C^{nr})$ generates a  local Weyl module of $L(\mathfrak{sl}_{n})$. Furthermore, we obtain in this way an explicit realization of all local Weyl modules for $L(\mathfrak{sl}_{n})$.
 \end{abstract}
\maketitle

\section{Introduction}
Motivated by the study of finite-dimensional modules of quantum affine algebras, Chari and Pressley introduced in \cite{CP}
 the notion of local Weyl modules for loop algebras. They conjectured therein that local Weyl modules are the classical limit of standard irreducible modules for quantum affine algebras, and further showed that this conjecture would follow from an additional conjecture concerning the dimensions of local Weyl modules. 
 The dimension conjecture has been proved via algebraic methods in \cite{CP,CL,FL} for the simply laced case  and in  \cite{N} for the non-simply laced case. These works also revealed close connections between local Weyl modules and several other important classes of modules, including Demazure modules and Kirillov--Reshetikhin modules.
  As pointed out by Nakajima, the dimension conjecture can also be deduced from  some deep geometric results in \cite{BN} and \cite{K}.
  
The notion of (local and global) Weyl modules has since been extended to various other interesting algebras, such as current Lie algebras \cite{CFK, FL, N}, twisted loop algebras \cite{CFS}, toroidal Lie algebras \cite{MPS},  and loop  superalgebras \cite{CLS}. On the other hand, Feigin and Makedonskyi introduced generalized Weyl modules in  \cite{FM}  by extending the notion of Weyl modules to arbitrary integral weights, and the connection between these modules and nonsymmetric Macdonald polynomials was investigated in \cite{FMO}.
 
 

Although local Weyl modules arise as the classical limits of irreducible modules for quantum affine algebras, they are generally not irreducible and possess rather intricate structures. 
Several fundamental problems concerning local Weyl modules remain open, most notably their realization problem. The main goal of this paper is to provide a unified construction of local Weyl modules for the loop algebra $\CL(\mathfrak{sl}_n)$ of $\mathfrak{sl}_n$. 
Our construction is motivated by the so-called  skew $(\mathfrak{gl}_{n},\mathfrak{gl}_{r})$ Howe duality on $\Lambda(\C^{nr})$, which yields an explicit realization of all finite-dimensional irreducible $\mathfrak{sl}_{n}$-modules.

In representation theory, one of the prominent notions is that of dual pair for reductive Lie groups/algebras, first introduced by Howe \cite{H1}. This concept has found numerous applications in invariant theory, automorphic representations, quantum groups,  infinite-dimensional Lie algebras, and  related areas. 
In this paper, we focus on the skew Howe duality for complex general linear Lie algebras.
Specifically, it is known that  $(\mathfrak{gl}_{n},\mathfrak{gl}_{r})$ forms a reductive dual pair, namely a pair of maximal commuting reductive subalgebras, inside $\mathfrak{gl}_{nr}$. The natural action of $\mathfrak{gl}_{nr}$ on the exterior algebra $\Lambda(\C^{nr})$ of $\C^{nr}$ therefore induces a $\mathfrak{gl}_{n}\times \mathfrak{gl}_{r}$-action  on $\Lambda(\C^{nr})$. 
The  skew $(\mathfrak{gl}_{n},\mathfrak{gl}_{r})$ Howe duality states that  the exterior algebra $\Lambda(\C^{nr})$  admits a remarkable  multiplicity-free decomposition under this $\mathfrak{gl}_{n}\times \mathfrak{gl}_{r}$-action. 
This decomposition is determined by those highest weight vectors of $\mathfrak{gl}_{n}\times \mathfrak{gl}_{r}$ in $\Lambda(\C^{nr})$, which are given explicitly by Grassmann monomials  (see \cite{H2} or Proposition \ref{jointh}).

Motivated by the work of \cite{CGT},
we extend the action of $\mathfrak{gl}_{n}$ on $\Lambda(\C^{nr})$ to its loop algebra $L(\mathfrak{gl}_{n})$, in terms of  finite sums of partial differential operators
glued together using certain Lagrange interpolation polynomials of degree $r-1$. Restricting this action, we may regard 
the exterior algebra $\Lambda(\C^{nr})$ as a module for the loop algebra $L(\mathfrak{sl}_{n})$ of $\mathfrak{sl}_{n}$. As the main result of this paper, we prove that every highest weight vector of $\mathfrak{gl}_{n}\times \mathfrak{gl}_{r}$ in $\Lambda(\C^{nr})$ generates a  local Weyl module of $L(\mathfrak{sl}_{n})$ (see Theorem \ref{th:main}).
Furthermore, by the factorization property of local Weyl modules, we obtain an explicit realization of  all local Weyl modules for $L(\mathfrak{sl}_{n})$ (see Corollary \ref{cor:main}).

The proof of  Theorem \ref{th:main} is based on a realization of certain local Weyl modules for the 
 $(r-1)$-th Takiff algebra $T_r(\mathfrak{sl}_n)=\mathfrak{sl}_{n}\otimes \C[t]/t^r\C[t]$ of $\mathfrak{sl}_n$.
 Local Weyl modules for Takiff algebras attached to finite-dimensional simple Lie algebras  were introduced and studied in \cite{FMM}. It is also known that every local Weyl module for a loop algebra descends naturally to a module over an appropriate Takiff algebra of the same type (see \cite{CFS}).
 
A key observation is that there is a natural embedding from the 
Takiff algebra $T_r(\mathfrak{gl}_n)=\mathfrak{gl}_{n}\otimes \C[t]/t^r\C[t]$ into $\mathfrak{gl}_{nr}$ (see \cite[Section 2]{L} for example).
Consequently, the natural action of $\mathfrak{gl}_{nr}$ on $\Lambda(\C^{nr})$ induces an action of  $T_r(\mathfrak{gl}_n)$, and hence also of  $T_r(\mathfrak{sl}_n)$, on $\Lambda(\C^{nr})$.
We prove in Theorem \ref{th:tak} that the $T_r(\mathfrak{sl}_n)$-submodule of  $\Lambda(\C^{nr})$ generated by a highest weight vector of $\mathfrak{gl}_{n}\times \mathfrak{gl}_{r}$ is a  local Weyl module. 
  On the other hand, 
we show that the action of  $L(\mathfrak{sl}_{n})$ on $\Lambda(\C^{nr})$ descends to an action of $T_r(\mathfrak{sl}_n)$. 
Moreover, up to twisting by an automorphism, these two 
$T_r(\mathfrak{sl}_n)$-module structures on $\Lambda(\C^{nr})$ coincide, thereby implying Theorem  \ref{th:main}.

Now we give an outline of the structure of this paper. In Section \ref{sec:2}, we review the  skew Howe duality for general linear Lie algebras. In Section \ref{sec:3}, we recall some basics on local Weyl modules for $L(\mathfrak{sl}_{n})$, and  then
 state the main result in Section \ref{sec:4}. In Section \ref{sec:5}, we construct certain local Weyl modules for the $(r-1)$-th Takiff algebra $T_r(\mathfrak{sl}_n)$. Finally, in Section \ref{sec:6}, we prove the main result using the result established in Section \ref{sec:5}.

All the Lie algebras considered in this paper are over the field $\C$ of complex
numbers. We denote the sets of integers, non-negative integers, positive integers,
complex numbers and non-zero complex numbers by $\Z$, $\N$, $\Z_{+}$, $\C$ and $\C^{\times}$, respectively. And, for a Lie algebra $\mathfrak{t}$, we will use the notation $\mathcal{U}(\mathfrak{t})$ to stand for the
universal enveloping algebra of $\mathfrak{t}$. 

\section{skew $(\mathfrak{gl}_n,\mathfrak{gl}_r)$ Howe duality}\label{sec:2}
 In this section, let $n$ and $r$ be two positive integers. Here we recall the skew Howe duality for general linear Lie algebras from \cite{H2}.

Write $\mathfrak{gl}_{n}$ for the complex general linear Lie algebra of rank $n$, which consists of all $n\times n$-matrices over $\C$. For every $1\leq i,j\leq n$, write $E_{i,j}$ for the  $n\times n$-matrix  with $1$ in the $(i,j)$-entry and zeros elsewhere. Note that there is a natural triangular decomposition \begin{align}\label{trigl}
\mathfrak{gl}_{n}=\mathfrak{n}_{n}^+\oplus \mathfrak{d}_n\oplus \mathfrak{n}_{n}^-,
\end{align} of $\mathfrak{gl}_{n}$,
where
\[\mathfrak{n}^{\pm}_{n}:=\sum_{1\le i,j\le n; \pm (j-i)>0} \C E_{i,j}\quad \text{and}\quad \mathfrak{d}_n:=\sum_{i=1}^{n}\C E_{i,i}.
\]

As usual, by a partition, we mean an infinite sequence 
\[\xi=(\xi_1,\xi_2,\dots)\]
of non-negative integers in non-increasing  order and containing only finitely many non-zero terms. The number of non-zero terms in $\xi$ is called its length, denoted by $\ell(\xi)$.
  For every partition $\xi=(\xi_1,\xi_2,\dots)$ with $\ell(\xi)\leq n$,  denote by $L_{\mathfrak{gl}_{n}}(\xi)$ the irreducible highest weight $\mathfrak{gl}_{n}$-module generated by a (non-zero) highest weight vector $v_{\xi}$ such that 
\[\fn_{n}^+.v_{\xi}=0\quad \textrm{and}\quad E_{j,j}.v_{\xi}=\xi_j v_{\xi}\quad \text{for}\ 1\le j\le n.\]


Form the Fock space
\begin{align}\label{fs}
V_{n,r}:=\Lambda[y_{i,j}\mid 1\leq i\leq n,~0\leq j\leq r-1],
\end{align}
which is the exterior algebra in the variables $y_{i,j}$. 
By identifying $V_{n,r}$ with the exterior algebra $\Lambda(\C^{nr})$ of $\C^{nr}$, 
 the natural action of $\mathfrak{gl}_{nr}$ on $\C^{nr}$ (and hence on $\Lambda(\C^{nr})$) induces a $\mathfrak{gl}_{nr}$-module structure on $V_{n,r}$ with the action given by 
\begin{align}\label{acglnr}
    E_{i+kn,j+tn}\mapsto y_{i,k}\frac{\partial}{\partial y_{j,t}}\quad \text{for $1\leq i,j\leq n$ and $0\leq k,t\leq r-1$.}
\end{align}
It is well known that $(\mathfrak{gl}_{n},\mathfrak{gl}_{r})$ forms a dual pair in $\mathfrak{gl}_{nr}$ in the sense that:  $(\mathfrak{gl}_{n},\mathfrak{gl}_{r})$ is a pair of maximal commuting subalgebras in  $\mathfrak{gl}_{nr}$
through the embedding
\begin{align*}
    &\mathfrak{gl}_{n}\hookrightarrow \mathfrak{gl}_{nr},\quad E_{i,j}\mapsto \sum_{k=0}^{r-1}E_{i+kn,j+kn},\\
    &\mathfrak{gl}_{r}\hookrightarrow \mathfrak{gl}_{nr},\quad E_{k,t}\mapsto \sum_{\ell=1}^{n}E_{\ell+(k-1)n,\ell+(t-1)n},
\end{align*}
where $1\leq i,j\leq n$ and $1\leq k,t\leq r$. This, together with \eqref{acglnr}, provides a $(\mathfrak{gl}_{n}\times \mathfrak{gl}_r)$-module structure on $V_{n,r}$. 
For later use, we write  the explicit action of $\mathfrak{gl}_n$ on $V_{n,r}$ as follows: 
\begin{align}\label{acgln}
 E_{i,j}\mapsto \sum_{\ell=0}^{r-1} y_{i,\ell}\frac{\partial}{\partial y_{j,\ell}}\quad \text{for $1\leq i,j\leq n$.}
\end{align}

\begin{dfnt} Let $\xi$ and $\eta$ be two partitions with $\ell(\xi)\leq n$ and $\ell(\eta)\leq r$.
A non-zero vector $v$ in $V_{n,r}$ is called a highest weight vector for $\mathfrak{gl}_n\times \mathfrak{gl}_r$ with type $(\xi,\eta)$  if 
\begin{align*}
    (\mathfrak{n}_{n}^+\times  \mathfrak{n}_{r}^+).v=0\ \text{and}\ (E_{i,i},E_{j,j}).v=(\xi_i+\eta_j)v\ \text{for}\ 1\le i\le n, 1\le j\le r.
\end{align*}

\end{dfnt}
For every partition $\eta=(\eta_1,\eta_2,\dots)$, let $\eta'=(\eta_1',\eta_2',\dots)$
denote its transposed partition, where
\[\eta_i'=\textrm{Card}\{j\in\Z_{+}\mid \eta_j\geq i\}.\] 
 Write $\mathcal{P}_{n,r}$ for the set of all partitions $\eta$ such that $\ell(\eta)\le n$ and $\ell(\eta')\le r$.
Let $\xi=(\xi_1,\xi_2,\dots)$ be a partition in $\mathcal{P}_{n,r}$. Write 
\begin{align}\label{eq:yxi}
\bm{y}_\xi:=\bm{y}_1(\xi_1)\bm{y}_{2}(\xi_2)\cdots  \bm{y}_{\ell(\xi)}(\xi_{\ell(\xi)})\in V_{n,r}.
\end{align}
Here, for $1\leq i\leq \ell(\xi)$, \[\bm{y}_i(\xi_i):=y_{i,0}y_{i,1}\cdots y_{i,\xi_i-1}.\]

The structure of $(\mathfrak{gl}_n\times\mathfrak{gl}_r)$-module $V_{n,r}$ is completely determined by the highest weight vectors inside it. 
In fact, the skew $(\mathfrak{gl}_n,\mathfrak{gl}_r)$ Howe duality asserts that as a $(\mathfrak{gl}_{n}\times  \mathfrak{gl}_{r})$-module, 
$V_{n,r}$ has the following multiplicity-free decomposition:
\[V_{n,r}\cong \bigoplus_{\xi\in\mathcal{P}_{n,r}}L_{\mathfrak{gl}_{n}}(\xi)\otimes L_{\mathfrak{gl}_{r}}(\xi').\]
This decomposition can be deduced from  the following result (cf.\,\cite{H2}).
\begin{prpt}\label{jointh}
For every $\xi\in\mathcal{P}_{n,r}$, $\bm{y}_\xi$ is a highest weight vector  for $\mathfrak{gl}_n\times \mathfrak{gl}_r$ in $V_{n,r}$ with type $(\xi,\xi')$. Conversely, up to a scalar, every highest weight vector  for $\mathfrak{gl}_n\times \mathfrak{gl}_r$ in $V_{n,r}$ has such a form.  
\end{prpt}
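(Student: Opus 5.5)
The plan has two parts: verify directly that $\bm{y}_\xi$ has the stated properties, then get the converse from a character computation (dual Cauchy identity) and complete reducibility.

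\emph{Weights.} Fix $\xi\in\mathcal{P}_{n,r}$. By \eqref{acgln}, $E_{i,i}\in\mathfrak{gl}_n$ acts as $\sum_\ell y_{i,\ell}\partial/\partial y_{i,\ell}$. This operator counts the factors of a Grassmann monomial whose first index is $i$, so $E_{i,i}.\bm{y}_\xi=\xi_i\bm{y}_\xi$. Likewise $E_{k,k}\in\mathfrak{gl}_r$ acts as $\sum_{l}y_{l,k-1}\partial/\partial y_{l,k-1}$. This counts the factors with second index $k-1$, and their number is $\mathrm{Card}\{i\mid \xi_i\ge k\}=\xi'_k$.

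\emph{Annihilation by $\mathfrak{n}_n^+$.} For $i<j$, $E_{i,j}$ acts as $\sum_\ell y_{i,\ell}\partial/\partial y_{j,\ell}$. The derivative is non-zero only if $\ell<\xi_j$. Since $\xi_j\le\xi_i$, in that case $y_{i,\ell}$ already occurs in $\bm{y}_i(\xi_i)$, and the product vanishes because $y_{i,\ell}^2=0$.

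\emph{Annihilation by $\mathfrak{n}_r^+$.} For $k<t$, $E_{k,t}$ acts as $\sum_l y_{l,k-1}\partial/\partial y_{l,t-1}$. If $y_{l,t-1}$ occurs in $\bm{y}_\xi$, then $\xi_l\ge t>k$, so $y_{l,k-1}$ occurs as well, and again every term is zero. This proves the first assertion.

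\emph{Converse.} First, $V_{n,r}$ is a finite-dimensional module for the reductive Lie algebra $\mathfrak{gl}_n\times\mathfrak{gl}_r$ on which the Cartan subalgebra acts semisimply, since the monomials are weight vectors. Hence it is a direct sum of irreducibles $L_{\mathfrak{gl}_n}(\mu)\otimes L_{\mathfrak{gl}_r}(\nu)$. Second, compute its character. The monomials give
\[
\mathrm{ch}\,V_{n,r}=\prod_{i=1}^n\prod_{k=1}^r(1+x_iz_k)=\sum_{\lambda\in\mathcal{P}_{n,r}}s_\lambda(x_1,\dots,x_n)\,s_{\lambda'}(z_1,\dots,z_r),
\]
using the dual Cauchy identity (Macdonald, Ch.~I). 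Since products of Schur polynomials are linearly independent characters, this yields $V_{n,r}\cong\bigoplus_{\lambda\in\mathcal{P}_{n,r}}L_{\mathfrak{gl}_n}(\lambda)\otimes L_{\mathfrak{gl}_r}(\lambda')$, with each summand appearing exactly once.

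Now let $v$ be any highest weight vector. Decompose $v$ into joint weight components; each component is still killed by $\mathfrak{n}_n^+\times\mathfrak{n}_r^+$, so we may assume $v$ has $\mathfrak{gl}_n$-weight $\mu$ and $\mathfrak{gl}_r$-weight $\nu$. (A homogeneous $v$ of the given type automatically has this form.) The highest weight vectors of weight $(\mu,\nu)$ in the decomposition above span $\bigoplus_\lambda \delta_{(\mu,\nu),(\lambda,\lambda')}\C v_\lambda\otimes v_{\lambda'}$. This space is zero unless $(\mu,\nu)=(\lambda,\lambda')$ for some $\lambda\in\mathcal{P}_{n,r}$, and then it is one-dimensional. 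By the first part it contains $\bm{y}_\lambda$, so $v\in\C\bm{y}_\lambda$.

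The only step needing care is the multiplicity-freeness, i.e.\ the dual Cauchy identity, which I would cite rather than reprove. If a self-contained argument is preferred, the alternative is a direct combinatorial one. Write a highest weight vector in the monomial basis and use the $E_{k,t}$ and $E_{i,j}$ raising operators to force its leading monomial, in a suitable order, to be $\bm{y}_\lambda$. This is more delicate, and I would only attempt it if the citation is unacceptable.
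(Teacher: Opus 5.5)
Your argument is correct, but there is no proof in the paper to follow: the proposition is quoted from Howe \cite{H2}. The paper's logic also runs opposite to yours. It takes the classification of joint highest weight vectors as the input, and deduces from it the multiplicity-free decomposition $V_{n,r}\cong\bigoplus_{\xi\in\mathcal{P}_{n,r}}L_{\mathfrak{gl}_n}(\xi)\otimes L_{\mathfrak{gl}_r}(\xi')$.

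You obtain that decomposition first, from three ingredients:
\begin{itemize}
\item complete reducibility, which holds because the centre lies in $\mathfrak{d}_n\times\mathfrak{d}_r$ and acts semisimply;
\item the dual Cauchy identity;
\item linear independence of irreducible characters.
\end{itemize}
You then read off the converse from two facts: the $\mathfrak{n}^+$-invariants of a finite-dimensional irreducible module form a one-dimensional space, and $\lambda\mapsto(\lambda,\lambda')$ is injective. This is not circular, since the dual Cauchy identity has purely combinatorial proofs (e.g.\ apply the involution $\omega$ to the Cauchy identity).

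Your direct check of the first assertion is complete. It is worth adding that $\xi_1=\ell(\xi')\le r$, so every index in $\bm{y}_\xi$ is in range and $\bm{y}_\xi\neq 0$.

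The trade-off is as follows. Your route is short and uses only standard character theory. However, it imports a symmetric-function identity that is equivalent to the decomposition itself. Your fallback, an intrinsic leading-monomial computation of the $\mathfrak{n}_n^+\times\mathfrak{n}_r^+$-invariants, would instead explain the decomposition and give the dual Cauchy identity as a corollary. That is the direction the paper has in mind.

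One minor point concerns the sentence ``we may assume $v$ is a weight vector''. As a reduction for an arbitrary $\mathfrak{n}^+$-invariant vector it is not valid: $\bm{y}_\xi+\bm{y}_\eta$ with $\xi\neq\eta$ is invariant but is not a multiple of a single $\bm{y}_\lambda$. It is harmless here only because the paper's definition of a highest weight vector of type $(\xi,\eta)$ already imposes the weight condition, as your parenthetical remark notes.
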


\section{Local Weyl modules for loop algebra of $\mathfrak{sl}_n$}\label{sec:3}
In the rest of this paper, let $n$ and $r$ be two positive integers with $n\geq 2$.
In this section, we introduce some basics on local Weyl modules.

 As in the introduction, denote by 
\[\mathfrak{sl}_{n}=[\mathfrak{gl}_{n},\mathfrak{gl}_{n}]\] 
the complex simple Lie algebra of type $A_{n-1}$. Note that  $\eqref{trigl}$ induces a 
triangular decomposition of $\mathfrak{sl}_{n}$ as follows:
\begin{align}
\mathfrak{sl}_{n}={\mathfrak{n}}_{n}^+\oplus{\mathfrak{h}_n}\oplus{\mathfrak{n}}_{n}^-,
\end{align}  where  \[\mathfrak{h}_n:=\mathfrak{d}_n\cap \mathfrak{sl}_{n}=\bigoplus_{i=1}^{n-1}\C h_i \qquad (h_i:=E_{i,i}-E_{i+1,i+1}).\] 
Write $\{\omega_1,\omega_2,\dots,\omega_{n-1}\}\subset (\fh_{n})^{\ast}$ for  the dual basis of $\{h_1,h_2,\dots,h_{n-1}\}$. Namely, 
\[\omega_i(h_j)=\delta_{i,j}\quad\te{for }1\leq i,j\leq n-1.\]
 Then 
\begin{align}\label{eq:dom}
P_{+}=\bigoplus_{i=1}^{n-1}\N\omega_i\subset(\mathfrak{h}_n)^{\ast}
\end{align} 
is the set of  dominant integral weights of $\mathfrak{sl}_{n}$.

For every complex Lie algebra $\mathfrak{a}$, denote by 
 \[L(\mathfrak{a}):=\mathfrak{a}\otimes \C[t,t^{-1}]\]
the loop algebra of $\mathfrak{a}$, where the  Lie bracket  is given by
\begin{align*}
[x(m),y(n)]=[x,y](m+n)\quad\te{for }x,y\in  \mathfrak{a}\te{ and }m,n\in\Z.
\end{align*}
 Here and henceforth, we set \[x(m)=x\otimes t^m\in L(\mathfrak{a})\quad \te{for }x\in\mathfrak{a}\te{ and }m\in\Z.\]


For every polynomial $\pi(z)$ with  constant term 1, define \[\pi^{+}(z)=\pi(z)\quad \textrm{and}\quad \pi^{-}(z)=\frac{z^{\deg \pi(z)}\pi(z^{-1})}{(z^{\deg \pi(z)}\pi(z^{-1}))|_{z=0}} \in \C[z],\]
where $\deg \pi(z)$ denotes the degree of $\pi(z)$.
Write $\mathcal{M}_n$ for the  set of all 
 $(n-1)$-tuples  \[\bm{\pi}=(\pi_1(z),\pi_2(z),\dots,\pi_{n-1}(z)),\] 
 where each $\pi_i(z)$ is a polynomial with constant term $1$.
 The local Weyl modules for $L(\mathfrak{sl}_{n})$ are parametrized by the elements in $\mathcal{M}_n$ (see\,\cite{CP}).

\begin{dfnt}\label{defweyl}
Let $\bm{\pi}=(\pi_1(z),\pi_2(z),\dots,\pi_{n-1}(z))\in\mathcal{M}_n$. 
The local Weyl module of  $L(\mathfrak{sl}_{n})$ associated to $\bm{\pi}$, denoted by $W(\bm{\pi})$, is a cyclic $L(\mathfrak{sl}_{n})$-module generated by  $v$ and subject to  the defining  relations 
\begin{align}\label{rela1} L({\mathfrak{n}}_{n}^+).v=0,\quad  h_i(0).v=(\deg\pi_i(z) )v,\quad\Lambda_i^{\pm}(z).v=\pi_i^{\pm}(z) v,
\end{align}
and 
\[(E_{i+1,i}(0))^{(\deg \pi_i(z))+1}.v=0\quad \te{for all }1\leq i\leq n-1.
\]
Here, \[\Lambda_i^{\pm}(z):=\mathrm{exp}\left(-\sum_{ m\in\Z_{+}}\frac{h_{i}(\pm m)}{m}z^{m}\right) \in L(\mathfrak{h}_n)[[z]].\]
\end{dfnt}

For every $a\in\C^{\times}$ and $\lambda=\sum_{i=1}^{n-1}\lambda_i\omega_i\in P_{+}$, set
\begin{align}\label{pia}
    \bm{\pi}_{\lambda,a}:=((1-az)^{\lambda_1},\dots ,(1-az)^{\lambda_{n-1}})\in \mathcal{M}_n.
\end{align}
We remark that when $\bm\pi=\bm{\pi}_{\lambda,a}$, 
the second and third defining relations in \eqref{rela1} are equivalent to  
the relations 
\begin{align}\label{eq:anotherhaction}
h_i(m).v=a^m \lambda_i v\quad\text{for all  $m\in\Z$ and $1\leq i\leq n-1$.}
\end{align}


Finally, we recall the following factorization property of local Weyl modules for later use (see \cite[Theorem 2]{CP}).
\begin{prpt}\label{prop:tensordec}
    Let $\bm{\pi}=(\pi_1(z),\dots,\pi_{n-1}(z))$, $\tilde{\bm{\pi}}=(\tilde{\pi}_1(z),\dots,\tilde{\pi}_{n-1}(z))\in\mathcal{M}_n$ 
such that $\pi_i(z)$ and $\tilde{\pi}_j(z)$ are coprime for $1\leq i,j\leq n-1$. Then one has that
\[W(\bm{\pi\tilde{\pi}})\cong W(\bm{\pi})\otimes W(\bm{\tilde{\pi}})\]
as $L(\mathfrak{sl}_{n})$-modules, where
\[\bm{\pi\tilde{\pi}}=(\pi_1(z)\tilde{\pi}_1(z),\pi_2(z)\tilde{\pi}_2(z),\dots,\pi_{n-1}(z)\tilde{\pi}_{n-1}(z)).\]
\end{prpt}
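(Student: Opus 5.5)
This is the factorization theorem of Chari--Pressley, recalled from \cite[Theorem 2]{CP}. A self-contained argument runs in three steps.

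\emph{Step 1: a surjection from the Weyl module onto the tensor product.} Let $v$ and $\tilde v$ be the cyclic generators of $W(\bm{\pi})$ and $W(\tilde{\bm{\pi}})$, and set $w=v\otimes\tilde v$. The vector $w$ satisfies the defining relations \eqref{rela1} for $W(\bm{\pi\tilde{\pi}})$:
\begin{itemize}
\item $L(\fn_n^+)$ kills $w$, since it acts by derivations.
\item $h_i(0)$ acts on $w$ by $\deg\pi_i+\deg\tilde\pi_i$.
\item $\Lambda_i^{\pm}(z)$ is the exponential of a primitive element, so it acts as $\Lambda_i^{\pm}\otimes\Lambda_i^{\pm}$. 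Hence $w$ is an eigenvector with eigenvalue $\pi_i^{\pm}\tilde\pi_i^{\pm}=(\pi_i\tilde\pi_i)^{\pm}$.
\item The relation $(E_{i+1,i}(0))^{N+1}w=0$ with $N=\deg\pi_i+\deg\tilde\pi_i$ follows from the Leibniz expansion and the two separate relations.
\end{itemize}
This gives a homomorphism $W(\bm{\pi\tilde\pi})\to W(\bm\pi)\otimes W(\tilde{\bm\pi})$. It remains to show that this map is surjective and that the dimensions agree.

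\emph{Step 2: surjectivity via coprimality.} Both Weyl modules are finite dimensional (standard, as in \cite{CP}). Let $I_{\bm\pi}\subset\C[t,t^{-1}]$ be the ideal generated by $\prod_i\pi_i^-(t)^{N_i}$ for large $N_i$; more precisely, take the ideal generated by a polynomial $f(t)$ whose roots are the inverse roots of the $\pi_i$. One shows that $\fn^-_n\otimes f(t)^N$ kills $W(\bm\pi)$ for $N\gg0$. The standard argument is to check that it kills $v$, using the $\mathfrak{sl}_2$ relations from Garland's formula, and then to use that the annihilator is an ideal. The same holds with $\tilde f$ for $W(\tilde{\bm\pi})$. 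Coprimality makes $f^N$ and $\tilde f^N$ coprime, so the Chinese remainder theorem gives
\[L(\mathfrak{sl}_n)\to \mathfrak{sl}_n\otimes\C[t]/(f^N)\ \oplus\ \mathfrak{sl}_n\otimes\C[t]/(\tilde f^N)\]
surjective. Through this map the tensor product becomes an outer tensor product of cyclic modules for a direct sum of Lie algebras, so it is generated by $w$.

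\emph{Step 3: the reverse dimension inequality.} We need $\dim W(\bm{\pi\tilde\pi})\le\dim W(\bm\pi)\dim W(\tilde{\bm\pi})$. I would show that $W(\bm{\pi\tilde\pi})$ is also annihilated by $\mathfrak{sl}_n\otimes (f\tilde f)^N$, so that it is a module over the direct sum above. Its generator $u$ can then be analysed through the idempotent-type elements coming from the CRT decomposition. The subalgebras $\mathfrak{sl}_n\otimes e\C[t]$ and $\mathfrak{sl}_n\otimes \tilde e\C[t]$ commute, where $e+\tilde e\equiv1$. Hence $\mathcal U$ acting on $u$ factors as a product $\mathcal U_1\mathcal U_2u$.

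Next, $\mathcal U_1u$ satisfies the relations of $W(\bm\pi)$: the $h$-eigenvalues restricted to the $f$-part recover $\pi_i$. Similarly $\mathcal U_2$ acts through the relations of $W(\tilde{\bm\pi})$. This yields a surjection $W(\bm\pi)\otimes W(\tilde{\bm\pi})\to W(\bm{\pi\tilde\pi})$, which gives the inequality, and Step 1 then becomes an isomorphism.

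The main obstacle is Step 3, and specifically verifying that $\mathcal U_1 u$ satisfies the Weyl relations for $\bm\pi$. The delicate point is extracting the $\Lambda^\pm$ eigenvalues and the integrability exponent $\deg\pi_i+1$ for the restricted subalgebra. I would first handle this at the level of $\mathfrak{sl}_2$ using Garland's formula, and then reduce the general case to it.
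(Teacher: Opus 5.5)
The paper gives no proof of this proposition; it only cites \cite[Theorem 2]{CP}. Your outline is the standard argument, and its structure is sound. Step 1 gives a map $W(\bm{\pi\tilde\pi})\to W(\bm\pi)\otimes W(\tilde{\bm\pi})$, Step 2 gets cyclicity of the tensor product from the Chinese remainder theorem, and Step 3 bounds $\dim W(\bm{\pi\tilde\pi})$ by splitting it over $\mathfrak{a}_1\oplus\mathfrak{a}_2$.

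However, the proof is unfinished at exactly the step you call the main obstacle: you never verify that $\mathcal U_1u$ satisfies the defining relations of $W(\bm\pi)$. The Garland-formula route you suggest for it is unnecessary; the step closes directly as follows.

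\emph{Setup.} Choose $e\in\C[t]$ with $e\equiv1 \bmod f^N$ and $e\equiv0 \bmod \tilde f^N$. Make $\mathcal U_1u$ an $L(\mathfrak{sl}_n)$-module by pulling back along the surjection $L(\mathfrak{sl}_n)\to\mathfrak{sl}_n\otimes\C[t]/(f^N)$, not by restricting the given action. Then $x\otimes g$ acts on $W(\bm{\pi\tilde\pi})$ as $x\otimes ge$.

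\emph{Eigenvalues.} The defining relations of $W(\bm{\pi\tilde\pi})$ say that $h_i\otimes g$ acts on $u$ by $\sum_b g(b)$, summed with multiplicity over the inverse roots $b$ of $\pi_i\tilde\pi_i$. Since $e=1$ at the zeros of $f$ and $e=0$ at the zeros of $\tilde f$, the operator $h_i\otimes t^me$ has eigenvalue $\sum_b b^m$ over the inverse roots of $\pi_i$ alone. This is exactly the $\Lambda_i^{\pm}$-data of $\pi_i$, and in particular $h_i\otimes e$ acts by $\deg\pi_i$.

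\emph{Exponent.} Since $e^2-e\in(f\tilde f)^N$ acts by zero, the operators $E_{i,i+1}\otimes e$, $h_i\otimes e$, $E_{i+1,i}\otimes e$ form an $\mathfrak{sl}_2$-triple on $W(\bm{\pi\tilde\pi})$. The vector $u$ is a highest weight vector of weight $\deg\pi_i$ for this triple, and $W(\bm{\pi\tilde\pi})$ is finite dimensional. Finite-dimensional $\mathfrak{sl}_2$-theory then gives $(E_{i+1,i}\otimes e)^{\deg\pi_i+1}u=0$.

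\emph{Well-definedness.} You should also say why $Xu\otimes Yu\mapsto XYu$ is well defined, for $X\in\mathcal U_1$ and $Y\in\mathcal U_2$. If $Xu=0$, then $XYu=YXu=0$, because the two factors commute on $W(\bm{\pi\tilde\pi})$; the case $Yu=0$ is immediate.

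\emph{A smaller imprecision in Step 2.} The subspace $\fn_n^-\otimes f^N\C[t,t^{-1}]$ is not an ideal of $L(\mathfrak{sl}_n)$, so the remark that "the annihilator is an ideal" does not apply to it. What you must show is that the whole ideal $\mathfrak{sl}_n\otimes f^N\C[t,t^{-1}]$ kills $v$. The $\fh_n$-part follows from the same evaluation formula, since $f$ vanishes at every $b$. For a non-simple root vector $E_{j,i}$ with $j>i+1$, you first need $E_{j,i}(0)^{N_{ij}+1}v=0$, where $N_{ij}=\sum_{k=i}^{j-1}\deg\pi_k$. This follows from integrability of $W(\bm\pi)$ as an $\mathfrak{sl}_n$-module, and only then can you apply Garland's formula in that root's copy of $L(\mathfrak{sl}_2)$.
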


\section{Main result}\label{sec:4}
In this section, we state the main result of this paper.
Recall  that $V_{n,r}$ (see \eqref{fs}) denotes the exterior algebra in the variables $y_{i,j}$ for  $1\leq i\leq n$ and $0\le j\le r-1$.

For $1\leq i\leq n$ and $0\leq j\leq r-1$, we set
 \[
x_{i,j} := \sum_{k=0}^{j} \binom{j}{k} \, y_{i, r-1-k}\in V_{n,r}.
\]
It is straightforward to see that 
\begin{align}\label{eq:xijpar}
    y_{i,j}=\sum_{k=1}^{r-j}\binom{r-j-1}{k-1}(-1)^{r-j-k}x_{i,k-1},
\end{align}
and so 
\[
V_{n,r}=\Lambda[x_{i,j}\mid 1\leq i\leq n,~0\leq j\leq r-1]
\]
is the  exterior algebra in the variables $x_{i,j}$ as well.

Let $a$ be a non-zero complex number. 
Define a linear map \[
\psi_{a,n,r}:L(\mathfrak{gl}_{n})\to \textrm{End}(V_{n,r})\]
by letting 
\begin{align}\label{eq:ac}
\psi_{a,n,r}(E_{i,k}(m))=a^m\sum_{j,s=0}^{r-1}\ell_{j,r}(m+s)x_{i,j}\frac{\partial}{\partial{x_{k,s}}}
\end{align}
for $1\leq i,k\leq n$ and $m\in\Z$. Here, \begin{align*}
\ell_{j,r}(t)=\prod_{0\le i\ne j\le r-1} \frac{t-i}{j-i}
\end{align*} denotes the fundamental Lagrange interpolation polynomial of degree $r-1$ at the points $j=0,1,\dots,r-1$.

\begin{prpt}\label{pr:homo}
The linear map $\psi_{a,n,r}$ is a Lie algebra homomorphism.
\end{prpt}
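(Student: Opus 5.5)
The plan is to factor $\psi_{a,n,r}$ through $\mathfrak{gl}_{nr}$ acting on $V_{n,r}$. We read that action in the $x$-variables rather than the $y$-variables. By \eqref{eq:xijpar}, the $x_{i,j}$ are again free exterior generators of $V_{n,r}$. Hence the operators $X_{(i,j),(k,s)}:=x_{i,j}\frac{\partial}{\partial x_{k,s}}$ are well-defined (odd) derivations composed with multiplication. They satisfy the standard relations
\[
[X_{(i,j),(k,s)},X_{(p,q),(l,u)}]=\delta_{k,p}\delta_{s,q}X_{(i,j),(l,u)}-\delta_{l,i}\delta_{u,j}X_{(p,q),(k,s)}.
\]
This is the same computation that makes \eqref{acglnr} a representation: the anticommutation relations $\frac{\partial}{\partial x_{k,s}}x_{p,q}+x_{p,q}\frac{\partial}{\partial x_{k,s}}=\delta_{k,p}\delta_{s,q}$ hold, and the even operators $X$ then commute as matrix units. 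Consequently
\[
\Phi:\mathfrak{gl}_n\otimes M_r(\C)\cong\mathfrak{gl}_{nr}\to \mathrm{End}(V_{n,r}),\qquad E_{i,k}\otimes e_{j,s}\mapsto X_{(i,j),(k,s)},
\]
is a Lie algebra homomorphism. Here $\mathfrak{gl}_n\otimes M_r(\C)$ carries the bracket $[x\otimes A,y\otimes B]=xy\otimes AB-yx\otimes BA$.

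Next, let $A_m\in M_r(\C)$ be the matrix with $(j,s)$-entry $\ell_{j,r}(m+s)$, for $0\le j,s\le r-1$. Then \eqref{eq:ac} reads
\[
\psi_{a,n,r}(E_{i,k}(m))=\Phi\bigl(E_{i,k}\otimes a^mA_m\bigr).
\]
It therefore suffices to show that $m\mapsto a^mA_m$ is a homomorphism of associative algebras $\C[t,t^{-1}]\to M_r(\C)$, i.e. $A_mA_{m'}=A_{m+m'}$ for all $m,m'\in\Z$ (the factors $a^m a^{m'}=a^{m+m'}$ are harmless). Granting this, the images $a^mA_m$ pairwise commute. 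Then
\[
[E_{i,k}\otimes a^mA_m,\,E_{p,l}\otimes a^{m'}A_{m'}]=[E_{i,k},E_{p,l}]\otimes a^{m+m'}A_{m+m'},
\]
which is exactly the image of $[E_{i,k}(m),E_{p,l}(m')]=[E_{i,k},E_{p,l}](m+m')$. Composing with $\Phi$ proves the proposition.

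The only real step is the identity $A_mA_{m'}=A_{m+m'}$. Its entries are
\[
(A_mA_{m'})_{j,u}=\sum_{s=0}^{r-1}\ell_{j,r}(m+s)\,\ell_{s,r}(m'+u).
\]
For fixed $j$ and $m$, the function $p(t):=\ell_{j,r}(m+t)$ is a polynomial in $t$ of degree $r-1$. Lagrange interpolation at the nodes $0,\dots,r-1$ gives $p(t)=\sum_{s}p(s)\ell_{s,r}(t)$ identically in $t$. Evaluating at $t=m'+u$ yields $\ell_{j,r}(m+m'+u)=(A_{m+m'})_{j,u}$, as required. As a consistency check, $A_0$ has entries $\ell_{j,r}(s)=\delta_{j,s}$, so $A_0=I$ and $\psi_{a,n,r}$ restricted to $\mathfrak{gl}_n$ is the diagonal action \eqref{acgln} written in the $x$-variables.
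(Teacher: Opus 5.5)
Your proof is correct and is essentially the paper's argument. The paper expands the commutator directly, using the matrix-unit commutation relations of the operators $x_{i,j}\frac{\partial}{\partial x_{k,s}}$, and collapses the inner sums with the interpolation identity \eqref{interpoly} for $g=\ell_{j,r}$; this is exactly the entrywise form of your $A_mA_{m'}=A_{m+m'}$, used in both orders. Your factorization through $\mathfrak{gl}_n\otimes M_r(\C)$ via $t^m\mapsto a^mA_m$ is tidier bookkeeping, and it also yields Lemma \ref{le:ker} almost for free (even for all of $\mathfrak{gl}_n\otimes I_{r,a}$): $A_1^{T}$ is the matrix of $g(t)\mapsto g(t+1)$ on polynomials of degree $\le r-1$ in the basis $\{\ell_{s,r}\}$, so $(A_1-I)^r=0$.
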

\begin{proof}
For  a polynomial $g(t)$ with $\te{deg\,}g(t)\leq r-1$, one has  (cf.\,\cite[Lemma 4.4]{CGT})
\begin{align}\label{interpoly}
g(m+n)=\sum_{j=0}^{r-1}g(m+j)\ell_{j,r}(n)\quad\textrm{for all }m,n\in\Z.
\end{align}
Then, for $1\leq i_1,i_2,k_1,k_2\leq n$ and $m_1,m_2\in\Z$, we have
\begin{align*}
    &[\psi_{a,n,r}(E_{i_1,k_1}(m_1)),\psi_{a,n,r}(E_{i_2,k_2}(m_2))]\\
    =\ &a^{m_1+m_2}\sum_{j_1,s_1,j_2,s_2=0}^{r-1}\ell_{j_1,r}(m_1+s_1)\ell_{j_2,r}(m_2+s_2)\left[x_{i_1,j_1}\frac{\partial}{\partial x_{k_1,s_1}},x_{i_2,j_2}\frac{\partial}{\partial x_{k_2,s_2}}\right]\\
    =\ &a^{m_1+m_2}\delta_{i_2,k_1}\sum_{j_1,s_2=0}^{r-1}\left(\sum_{j_2=0}^{r-1}\ell_{j_1,r}(m_1+j_2)\ell_{j_2,r}(m_2+s_2)\right)x_{i_1,j_1}\frac{\partial}{\partial x_{k_2,s_2}}\\
    &-a^{m_1+m_2}\delta_{i_1,k_2}\sum_{j_2,s_1=0}^{r-1}\left(\sum_{s_2=0}^{r-1}\ell_{j_2,r}(m_2+s_2)\ell_{s_2,r}(m_1+s_1)\right)x_{i_2,j_2}\frac{\partial}{\partial x_{k_1,s_1}}\\
    =\ &a^{m_1+m_2}\delta_{i_2,k_1}\sum_{j_1,s_2=0}^{r-1}\ell_{j_1,r}(m_1+m_2+s_2)x_{i_1,j_1}\frac{\partial}{\partial x_{k_2,s_2}}\\
    &- a^{m_1+m_2}\delta_{i_1,k_2}\sum_{j_2,s_1=0}^{r-1}\ell_{j_2,r}(m_1+m_2+s_1)x_{i_2,j_2}\frac{\partial}{\partial x_{k_1,s_1}}\\
    =\ &\psi_{a,n,r}\left([E_{i_1,k_1}(m_1),E_{i_2,k_2}(m_2)]\right),
\end{align*}
where in the second last equality we have used \eqref{interpoly} with $g(t)=\ell_{j_1,r}(t)$ and $\ell_{j_2,r}(t)$. 

\end{proof}

Via the homomorphism $\psi_{a,n,r}$, $V_{n,r}$ becomes an $L(\mathfrak{gl}_{n})$-module. Note that 
\begin{align*} \frac{\partial}{\partial x_{i,j}} = \sum_{k=1}^{r-j} \binom{r-k}{j} (-1)^{r-k-j} \frac{\partial}{\partial y_{i,k-1}}
 \end{align*}
 for $1\leq i\leq n$ and $0\le j\le r-1$.
This, together with  \eqref{eq:xijpar}, implies that 
\begin{align*}
\psi_{a,n,r}(E_{i,j}(0))=\ &  \sum_{k=0}^{r-1}x_{i,k}\frac{\partial}{\partial{x_{j,k}}}\\
=\ &  \sum_{k=0}^{r-1} x_{i,k}\left(\sum_{p=0}^{r-k-1} \binom{r-p-1}{k} (-1)^{r-k-p-1} \frac{\partial}{\partial y_{j,p}}\right)\\
=\ &\sum_{p=0}^{r-1}\left(\sum_{k=0}^{r-p-1}\binom{r-p-1}{k}(-1)^{r-p-k-1}x_{i,k}\right)\frac{\partial}{\partial y_{j,p}}\\
=\ & \sum_{p=0}^{r-1} y_{i,p}\frac{\partial}{\partial y_{j,p}}
\end{align*}
for $1\le i,j\le n$. 
Thus, the action of $L(\mathfrak{gl}_{n})$ on $V_{n,r}$ is an extension of the action of $\mathfrak{gl}_{n}$ on $V_{n,r}$  (see \eqref{acgln}). 

From now on, we view $V_{n,r}$ as an $L(\mathfrak{sl}_{n})$-module by taking restriction, denoted by $V_{n,r}(a)$.
Let $\xi=(\xi_1,\xi_2,\dots)$ be a partition in $\mathcal{P}_{n,r}$.
Denote by
\begin{equation}\label{lamxi}
\lambda(\xi):=
(\xi_1-\xi_2)\omega_1+(\xi_2-\xi_3)\omega_2+\dots+(\xi_{n-1}-\xi_{n})\omega_{n-1}
\end{equation} 
 the dominant integral weight of $\mathfrak{sl}_{n}$ associated to $\xi$. 
 Then we have the local Weyl module $W({\bm{\pi}}_{\lambda(\xi),a})$ for $L(\mathfrak{sl}_n)$ (see \eqref{pia}). 
On the other hand, recall that  there is a Grassmann monomial $\bm{y}_\xi$  in $V_{n,r}$ associated to $\xi$ (see \eqref{eq:yxi}). 
Write 
$W_{a,\xi}$ for the $L(\mathfrak{sl}_{n})$-submodule of $V_{n,r}(a)$ generated by the element $\bm{y}_\xi$. 

 The following  is the main result of this paper, whose proof will be  presented in Section \ref{sec:6}.
\begin{thmt}\label{th:main}
Let $a\in \C^{\times}$ and let $\xi\in \mathcal{P}_{n,r}$. Then  
\begin{align}
W_{a,\xi}\cong W(\bm{\pi}_{\lambda(\xi),a})
 \end{align}
as $L(\mathfrak{sl}_{n})$-modules.
 \end{thmt}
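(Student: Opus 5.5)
The plan follows the strategy outlined in the introduction: pass through the Takiff algebra $T_r(\mathfrak{sl}_n)=\mathfrak{sl}_n\otimes\C[t]/t^r\C[t]$. There are two main steps.

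\textbf{Step 1: the action of $L(\mathfrak{gl}_n)$ factors through the Takiff algebra.} Define
\[
\phi_a:L(\mathfrak{gl}_n)\to T_r(\mathfrak{gl}_n),\qquad x(m)\mapsto x\otimes (a+ t)^m \bmod t^r,
\]
which makes sense since $a\neq0$, so $(a+t)^m$ is invertible in $\C[t]/(t^r)$. This map is surjective; indeed, by the Chinese remainder theorem, $\C[t,t^{-1}]\to \C[t,t^{-1}]/(t-a)^r$ is onto. I would check that $\psi_{a,n,r}$ factors as $\rho\circ\phi_a$. Here $\rho$ is an explicit representation of $T_r(\mathfrak{gl}_n)$ on $V_{n,r}$ by operators $x_{i,j}\partial/\partial x_{k,s}$, obtained from the embedding $T_r(\mathfrak{gl}_n)\hookrightarrow\mathfrak{gl}_{nr}$.

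Concretely, the operator $\sum_{j,s}\ell_{j,r}(m+s)\,x_{i,j}\partial_{x_{k,s}}$ depends on $m$ only through the values $g(m)$, where $g$ ranges over polynomials of degree $\le r-1$. Equivalently, $a^m\ell_{j,r}(m+s)$ is a combination of $m^p a^m$ with $p<r$, and these are precisely the coefficients of $E_{i,k}\otimes(a+t)^m$ modulo $t^r$, since $(a+t)^m=\sum_p\binom{m}{p}a^{m-p}t^p$. So $\psi_{a,n,r}(E_{i,k}(m))=\sum_{p<r}\binom{m}{p}a^{m-p}\rho(E_{i,k}\otimes t^p)$ for suitable operators $\rho(E_{i,k}\otimes t^p)$. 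I would identify these operators, after the change of variables between $x$ and $y$, with the standard nilpotent shift action
\[
E_{i,k}\otimes t^p\mapsto \sum_{\ell}y_{i,\ell+p}\,\partial/\partial y_{k,\ell},
\]
possibly up to an automorphism of $T_r$ (a reversal or rescaling of $t$). This is the ``twist by an automorphism'' mentioned in the introduction. Consequently $W_{a,\xi}$ equals the $T_r(\mathfrak{sl}_n)$-submodule generated by $\bm y_\xi$, pulled back along $\phi_a$.

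\textbf{Step 2: this Takiff submodule is a local Weyl module for $T_r(\mathfrak{sl}_n)$.} This is the anticipated Theorem~\ref{th:tak}. The relations are checked directly on $\bm y_\xi$:
\begin{itemize}
\item $\mathfrak n^+\otimes t^p$ kills $\bm y_\xi$, since raising a row index $k\to i<k$ produces a variable $y_{i,\ell+p}$ already present, because $\xi_i\ge\xi_k$;
\item $h\otimes t^p$ kills $\bm y_\xi$ for $p\ge1$, since shifting within row $i$ gives a repeated variable or one beyond $\xi_i-1$;
\item $h\otimes 1$ acts by $\lambda(\xi)$;
\item the relation $E_{i+1,i}^{\lambda_i+1}\bm y_\xi=0$ follows from finite-dimensional $\mathfrak{sl}_2$-theory (Proposition~\ref{jointh}).
\end{itemize}
Together with $h_i(m)\bm y_\xi=a^m\lambda_i\bm y_\xi$ via $\phi_a$, this gives a surjection $W(\bm\pi_{\lambda(\xi),a})\twoheadrightarrow W_{a,\xi}$. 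Injectivity is a dimension comparison: the local Weyl module has dimension $\prod_i\binom{n}{\lambda(\xi)\text{-column}}$, i.e.\ $\dim W(\bm\pi_{\lambda,a})=\prod_{i}\binom{n}{i}^{\lambda_i}$ (the dimension formula of \cite{CP,CL}), and this is also a known upper bound for the Takiff Weyl module \cite{FMM}.

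\textbf{Main obstacle.} The hard step is the lower bound $\dim W_{a,\xi}\ge\prod_i\binom{n}{i}^{\lambda_i}$. My first attempt is to show that the submodule generated by $\bm y_\xi$ contains a tensor-product-like configuration. The columns of $\xi$ correspond to the fundamental weights $\omega_{\xi'_c}$, and column $c$ uses the variables $y_{\cdot,c-1}$. One filters by $t$-degree, i.e.\ by the second index of $y$, and shows that the associated graded of $W_{a,\xi}$ surjects onto $\bigotimes_{c}\Lambda^{\xi'_c}(\C^n)$, realized by the spans of the monomials $\prod_c\prod_{i\le\xi'_c} y_{\sigma_c(i),c-1}$. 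The argument would use a leading-term analysis together with the operators $E\otimes t^p$, which move variables between columns, in order to independently reach every choice of a $\xi'_c$-subset in each column. If this direct argument stalls, the fallback is to compare the graded character with the Demazure/fusion character known for Takiff Weyl modules.
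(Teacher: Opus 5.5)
\textbf{Main gap: the lower bound is not proved, and the proposed mechanism cannot work.} Your reductions follow the paper's architecture: $W_{a,\xi}$ is a quotient of $W(\bm\pi_{\lambda(\xi),a})$, the action factors through $T_r(\mathfrak{sl}_n)$, and the upper bound comes from \cite{CL,FMM}. So everything rests on the lower bound $\dim T_r(W_\xi)\ge\prod_i\binom{n}{i}^{\xi_i-\xi_{i+1}}$, which you do not prove.

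The columnwise route you sketch fails. Grade $V_{n,r}$ by the total second index of a monomial. Each $x\otimes\overline{t^m}$ is homogeneous of degree $-m$ (see the next paragraph for the correct formula). Hence $T_r(W_\xi)$ is a graded subspace, and its component in the degree of $\bm y_\xi$ is exactly $\mathcal{U}(\mathfrak{sl}_n).\bm y_\xi\cong L(\lambda(\xi))$.

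All your monomials $\prod_c\prod_{i\le\xi'_c}y_{\sigma_c(i),c-1}$ lie in that single degree, and there are $\prod_c\binom{n}{\xi'_c}$ of them. This number is in general larger than $\dim L(\lambda(\xi))$. So they cannot all be leading terms of elements of the module for any monomial order, and the projection of $T_r(W_\xi)$ onto their span has image only $L(\lambda(\xi))$. Already for $n=2$, $\xi=(2,0)$, the module is spanned by $y_{1,0}y_{1,1}$, $y_{2,0}y_{1,1}+y_{1,0}y_{2,1}$, $y_{2,0}y_{2,1}$ and $y_{1,0}y_{2,0}$. The fourth dimension sits in lower degree, with both variables in column $0$. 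Your fallback of comparing with known characters of Takiff Weyl modules only bounds the quotient from above, so it is circular.

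The missing idea is the paper's row-wise induction (Proposition \ref{prop:dimtak}). For $0\le k_j\le\xi_j-\xi_{j+1}$ and $\bm m_j\in\N_{\mathrm{ord}}^{k_j}(\xi_{j+1},\xi_j-1)$, apply $\prod_j\prod_{i=1}^{k_j}E_{n,j}\otimes\overline{t^{\xi_j-m_{j,k_j-i+1}-i}}$ to $\bm y_\xi$. This moves $k_j$ variables of row $j$ into row $n$ at the positions $\bm m_j$. A reverse-lexicographic leading-term argument on the row-$n$ indices (Lemma \ref{le:maxord}) gives leading term $c\,\bm y_{\bm\xi_n-\bm k}\,\bm y_{n,(0)_{\xi_n}}\bm y_{n,\bm m_{n-1}}\cdots\bm y_{n,\bm m_1}$, where $\bm y_{\bm\xi_n-\bm k}$ is again a highest weight vector on rows $1,\dots,n-1$. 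Since $T_r(\mathfrak{sl}_{n-1})$ does not touch row $n$, induction on $n$ together with $\binom{n}{i}=\binom{n-1}{i}+\binom{n-1}{i-1}$ gives exactly the required count.

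\textbf{Secondary, fixable error in Step 1.} Your factorization argument is sound: $a^{-m}\psi_{a,n,r}(E_{i,k}(m))$ is an operator-valued polynomial in $m$ of degree $\le r-1$, you expand it in the basis $\binom{m}{p}$, and you use surjectivity of $\phi_a$. It is even slightly slicker than Lemma \ref{le:ker}. But the operators it produces are not the ones you guess. Doing the computation gives Lemma \ref{lem:actionT-r}: $E_{i,k}\otimes\overline{t^m}\mapsto a^m\sum_{p=m}^{r-1}y_{i,p-m}\,\partial/\partial y_{k,p}$. This lowers the second index; for $r=2$ one gets $a^{-m}\psi_{a,n,r}(E_{i,k}(m))=\sum_p y_{i,p}\partial/\partial y_{k,p}+m\,y_{i,0}\partial/\partial y_{k,1}$.

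With your raising formula, $\bm y_\xi$ is not a highest weight vector: for $\xi=(1,0)$ and $r\ge 2$, $h_1\otimes t$ sends $y_{1,0}$ to $y_{1,1}$. So your justifications ``or one beyond $\xi_i-1$'' and ``$y_{i,\ell+p}$ already present'' are false. No automorphism of $T_r$ repairs this: the relabelling $y_{i,\ell}\leftrightarrow y_{i,r-1-\ell}$ intertwines the two actions, but it moves $\bm y_\xi$.

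With the correct lowering formula your checks go through. For $i\le k$, the substituted variable $y_{i,p-m}$ satisfies $0\le p-m\le p\le\xi_k-1\le\xi_i-1$. So it already occurs in $\bm y_\xi$, and the term vanishes unless $i=k$ and $m=0$.
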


Finally, let $\bm{\pi}=(\pi_1(z),\pi_2(z),\dots,\pi_{n-1}(z))\in\mathcal{M}_n$.  For every $1\le i\le n-1$, we know
 \begin{align}\label{genpi}
 \pi_i(z)=\prod_{k=1}^{s_{i}}(1-b_{i,k}z)^{r_{i,k}}
 \end{align}
 for some (uniquely determined) non-zero distinct complex numbers $b_{i,1},b_{i,2},\dots,b_{i,s_i}$ and positive integers $r_{i,1},r_{i,2}, \dots,r_{i,s_i}$. 
 Let $a^{(1)}, a ^{(2)}, \dots, a^{(m)} $ be the distinct elements in the set $\{b_{i,k}\mid 1 \leq i \leq n-1, 1 \leq k \leq s_{i}\}, $ and let $\lambda^{(j)}=\sum_{i=1}^{n-1}\lambda_i^{(j)}\omega_i\in P_{+}$ for $1 \leq j \leq m$ such that
 \[\lambda_{i} ^{(j)} = 
 \begin{cases}
     r_{i,k} , &\mbox{if } a^{(j)} = b_{i,k} \mbox{ for some }k,\\
     0, &\mbox{otherwise}.
 \end{cases}\]
Then one has that
$\bm{\pi }= \displaystyle\prod_{j=1}^{m} \bm{\pi}_{\lambda^{(j)},a^{(j)}}$, and hence 
\begin{align}\label{tensor}
W(\bm\pi)\cong \otimes_{j=1}^m W(\bm{\pi}_{\lambda^{(j)},a^{(j)}})
\end{align}
by Proposition \ref{prop:tensordec}. 

Form the tensor $L(\mathfrak{sl}_n)$-module 
\[
V_{\bm{\pi}}:=\otimes_{j=1}^m V_{n,r^{(j)}}(a^{(j)}),
\]
where $r^{(j)}=\sum\limits_{k=1}^{n-1}\lambda_k^{(j)}$ for $1\leq j\leq m$. 
Note that
\[
W_{\bm{\pi}}:=\otimes_{j=1}^m W_{a^{(j)},\xi^{(j)}}
\]
is the submodule of $V_{\bm{\pi}}$ generated by the vector $\otimes_{j=1}^m \bm{y}_{\xi^{(j)}}$,
where 
\[\xi^{(j)}=(\sum_{k=1}^{n-1}\lambda_k^{(j)},\sum_{k=2}^{n-1}\lambda_k^{(j)},\dots,\lambda_{n-1}^{(j)},0,\dots)\in\mathcal{P}_{n,r^{(j)}}\quad\text{for $1\le j\le m$}.\]
Combining  Theorem \ref{th:main} with \eqref{tensor}, we have the following realization of all local Weyl modules for $L(\mathfrak{sl}_{n})$.
\begin{cort} \label{cor:main}With the notation given above, one has that
  \begin{align*}
    W(\bm\pi)\cong W_{\bm{\pi}}
\end{align*}
as $L(\mathfrak{sl}_{n})$-modules.

\end{cort}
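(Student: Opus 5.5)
The statement is Corollary \ref{cor:main}, and it follows by combining Theorem \ref{th:main} with the factorization \eqref{tensor}. The plan is as follows.

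First, for each $1\le j\le m$, we check that $\xi^{(j)}$ lies in $\mathcal{P}_{n,r^{(j)}}$ and that $\lambda(\xi^{(j)})=\lambda^{(j)}$.

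\begin{itemize}
\item By construction $\xi^{(j)}_i=\sum_{k\ge i}\lambda^{(j)}_k$. This sequence is non-increasing, and its entries vanish for $i\ge n$, so $\ell(\xi^{(j)})\le n-1\le n$.
\item Its largest part is $\xi^{(j)}_1=r^{(j)}$. Hence $\ell(\xi^{(j)\prime})=\xi^{(j)}_1=r^{(j)}$, and so $\xi^{(j)}\in\mathcal{P}_{n,r^{(j)}}$.
\item From \eqref{lamxi}, $\lambda(\xi^{(j)})=\sum_i(\xi^{(j)}_i-\xi^{(j)}_{i+1})\omega_i=\sum_i\lambda^{(j)}_i\omega_i=\lambda^{(j)}$.
\end{itemize}

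We may assume $r^{(j)}\ge1$. This holds because each $a^{(j)}$ equals some $b_{i,k}$, which has $r_{i,k}\ge1$, so $V_{n,r^{(j)}}$ is defined with a positive integer index.

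Second, we apply Theorem \ref{th:main} with $r=r^{(j)}$, $a=a^{(j)}$ and $\xi=\xi^{(j)}$. This gives $L(\mathfrak{sl}_n)$-isomorphisms $W_{a^{(j)},\xi^{(j)}}\cong W(\bm{\pi}_{\lambda^{(j)},a^{(j)}})$. Tensoring them together yields
\[
W_{\bm{\pi}}=\otimes_{j=1}^m W_{a^{(j)},\xi^{(j)}}\cong\otimes_{j=1}^m W(\bm{\pi}_{\lambda^{(j)},a^{(j)}}).
\]
The right-hand side is isomorphic to $W(\bm\pi)$ by \eqref{tensor}, which concludes the argument.

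The one point that needs justification is \eqref{tensor} itself. It follows by induction on $m$ from Proposition \ref{prop:tensordec}, using the following observations.

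\begin{itemize}
\item The identity $\bm\pi=\prod_j\bm{\pi}_{\lambda^{(j)},a^{(j)}}$ holds componentwise: $\pi_i(z)=\prod_j(1-a^{(j)}z)^{\lambda^{(j)}_i}$, since the $b_{i,k}$ are distinct for fixed $i$.
\item Because the $a^{(j)}$ are distinct, every component of $\bm{\pi}_{\lambda^{(j)},a^{(j)}}$ is a power of $(1-a^{(j)}z)$. Every component of $\prod_{j'>j}\bm{\pi}_{\lambda^{(j')},a^{(j')}}$ is a product of powers of $(1-a^{(j')}z)$ with $j'\ne j$. The two are therefore coprime, which is exactly the hypothesis of Proposition \ref{prop:tensordec}.
\end{itemize}

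A minor remaining point is that the tensor product of the cyclic submodules $W_{a^{(j)},\xi^{(j)}}$ is indeed the submodule generated by $\otimes_j\bm{y}_{\xi^{(j)}}$, as asserted in the text. This is not actually needed, since the corollary concerns $W_{\bm\pi}$ as defined by the tensor product. If one wants it anyway, it follows by transporting the corresponding fact for local Weyl modules with distinct spectral parameters (part of \cite[Theorem 2]{CP}) through the isomorphisms above.
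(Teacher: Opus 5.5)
Your proposal is correct and matches the paper's argument: you apply Theorem~\ref{th:main} factorwise (with $r=r^{(j)}$, $a=a^{(j)}$, $\xi=\xi^{(j)}$) and combine the resulting isomorphisms with the factorization \eqref{tensor} coming from Proposition~\ref{prop:tensordec}. The checks you add are exactly the details the paper leaves implicit: that $\xi^{(j)}\in\mathcal{P}_{n,r^{(j)}}$, that $\lambda(\xi^{(j)})=\lambda^{(j)}$, that $r^{(j)}\geq 1$, and the coprimality needed for the induction on $m$.
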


\section{Local Weyl modules for Takiff algebra of $\mathfrak{sl}_{n}$}\label{sec:5}
In this section we give a realization of local Weyl modules for the $(r-1)$-th Takiff algebra of $\mathfrak{sl}_{n}$,
which plays a key role in the proof of Theorem \ref{th:main}. 

 For every complex Lie algebra $\mathfrak{a}$, write 
\[
T_r(
 \mathfrak{a}):=\mathfrak{a}\otimes \C[t]/t^{r}\C[t]\]
for the $(r-1)$-th Takiff algebra of $\mathfrak{a}$.
For every $i\in \N$, set $\overline{t^i}:=t^i+t^{r}\C[t]$. 
Then 
\[\{\overline{t^i}\mid 0\leq i\leq r-1\}\]
forms a basis of $\C[t]/t^{r}\C[t]$.


We remark that there is a natural embedding from $T_r(\mathfrak{gl}_{n})$ into $\mathfrak{gl}_{nr}$ defined by 
\[ E_{i,j}\otimes \overline{t^k}\mapsto \sum_{s=0}^{r-k-1}E_{i+sn,j+(s+k)n}\quad \text{for $1\leq i,j\leq n$ and $0\leq k\leq r-1$.}\]
This, together with \eqref{acglnr},
yields a $T_r(\mathfrak{gl}_{n})$-module structure on $V_{n,r}$ with the action given by
\begin{align}\label{actak}
E_{i,j}\otimes \overline{t^k}\mapsto \sum_{s=0}^{r-k-1} y_{i,s}\frac{\partial}{\partial y_{j,s+k}}\quad\text{for $1\leq i,j\leq n$ and $0\leq k\leq r-1$.}
\end{align}
By taking restriction, $V_{n,r}$ becomes a  $T_r(\mathfrak{sl}_{n})$-module. We denote by $T_r(W_\xi)$ the $T_r(\mathfrak{sl}_{n})$-submodule of $V_{n,r}$ generated by ${\bm y_\xi}$.

The following result is straightforward. 
\begin{lemt}\label{lem:trhighest}
 The following relations hold in $T_r(W_\xi):$
\begin{align*}
 T_r({\mathfrak{n}}_{n}^+).\bm y_{\xi}=0,\ (h_i\otimes \overline{t^j}).\bm y_{\xi}=\delta_{j,0}(\xi_i-\xi_{i+1})\bm y_{\xi},\ \textrm{and}\ (E_{i+1,i}\otimes \overline{1})^{\xi_i-\xi_{i+1}+1}.\bm y_{\xi}=0,
\end{align*}
where $1\leq i\leq n-1$ and $0\leq j\leq r-1$.  
\end{lemt}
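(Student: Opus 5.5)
The relations follow by direct computation with the operators \eqref{actak} acting on the monomial $\bm y_\xi=\bm y_1(\xi_1)\cdots\bm y_{\ell(\xi)}(\xi_{\ell(\xi)})$. Recall that $y_{i,s}$ occurs in $\bm y_\xi$ exactly when $s\le \xi_i-1$. Each operator $y_{i,s}\partial/\partial y_{j,s+k}$ is an (odd) derivation composed with multiplication, so on a monomial it either kills it or replaces the variable $y_{j,s+k}$ by $y_{i,s}$, up to sign.

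\emph{Annihilation by $T_r(\mathfrak n_n^+)$.} Take $E_{i,j}\otimes\overline{t^k}$ with $i<j$. A term $y_{i,s}\partial/\partial y_{j,s+k}$ acts nontrivially only if $s+k\le \xi_j-1$. In that case $s\le \xi_j-1\le \xi_i-1$, because $\xi$ is non-increasing. So $y_{i,s}$ already appears in $\bm y_\xi$, and multiplying by it gives zero since $y_{i,s}^2=0$. Every term therefore vanishes.

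\emph{Action of $h_i\otimes\overline{t^j}$.}
\begin{itemize}
\item For $j\ge1$, the operator $E_{i,i}\otimes\overline{t^j}=\sum_s y_{i,s}\partial/\partial y_{i,s+j}$ acts nontrivially only if $s+j\le\xi_i-1$. Then $s<\xi_i-1$, so $y_{i,s}$ is already present and the result is $0$. The same argument applies to $E_{i+1,i+1}\otimes\overline{t^j}$, so $h_i\otimes\overline{t^j}$ kills $\bm y_\xi$.
\item For $j=0$, the operator $E_{i,i}\otimes\overline 1$ is the number operator counting the variables $y_{i,\ast}$. It acts on $\bm y_\xi$ by $\xi_i$, so $h_i\otimes \overline 1$ acts by $\xi_i-\xi_{i+1}$.
\end{itemize}
This gives the second relation, with the factor $\delta_{j,0}$.

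\emph{The nilpotency relation.} Here I will avoid computing the power directly and instead use $\mathfrak{sl}_2$-theory. The elements $E_{i,i+1}\otimes\overline1$, $h_i\otimes\overline1$, $E_{i+1,i}\otimes\overline1$ span a copy of $\mathfrak{sl}_2$. Their action is the restriction of the $\mathfrak{gl}_n$-action \eqref{acgln}, and $V_{n,r}$ is finite-dimensional. By the first two relations, $\bm y_\xi$ is a highest weight vector for this $\mathfrak{sl}_2$ of weight $\xi_i-\xi_{i+1}$. Standard finite-dimensional $\mathfrak{sl}_2$-theory then gives $f^{m+1}v=0$ for any highest weight vector $v$ of weight $m$. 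This yields $(E_{i+1,i}\otimes\overline1)^{\xi_i-\xi_{i+1}+1}.\bm y_\xi=0$.

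None of these steps is a real obstacle. The only care needed is bookkeeping of index ranges and the monotonicity $\xi_{j}\le\xi_i$ for $i<j$; signs never matter because every term that could contribute is zero.
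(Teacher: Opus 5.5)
Your proposal is correct and is the direct verification with the operators \eqref{actak} that the paper leaves to the reader as ``straightforward.'' The only variation is in the nilpotency relation. There you use finite-dimensional $\mathfrak{sl}_2$-theory, whereas the direct count observes that each application of $E_{i+1,i}\otimes\overline{1}$ must move one of the $\xi_i-\xi_{i+1}$ variables $y_{i,s}$ with $\xi_{i+1}\le s\le \xi_i-1$ to $y_{i+1,s}$; both arguments are valid.
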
 


Similar to Definition \ref{defweyl}, 
one also has the notion of local Weyl modules for $T_r(\mathfrak{sl}_n)$ (cf.\,\cite{FMM}). For every 
 $\lambda\in P_{+}$, denote by 
$W_r(\lambda)$ the local Weyl module of $T_r(\mathfrak{sl}_n)$ associated to $\lambda$, which by definition is the cyclic $T_r(\mathfrak{sl}_{n})$-module generated by a vector $v$ and subject to  the defining  relations 
\begin{align}\label{rela2} T_r({\mathfrak{n}}_{n}^+).v=0,\quad  (h_i\otimes \overline{t^j}).v=\delta_{j,0}\lambda(h_i)v,\quad\textrm{and}\quad(E_{i+1,i}\otimes 1)^{\lambda(h_i)+1}.v=0
\end{align}
  for $1\leq i\leq n-1$ and $0\leq j\leq r-1$. 

The following result gives a realization of these local Weyl modules for $T_r(\mathfrak{sl}_{n})$.
\begin{thmt}\label{th:tak}
 Let $n,r\in\Z_{+}$ with $n\geq 2$  and  let $\xi\in \mathcal{P}_{n,r}$. Then 
 one has that
 \[T_r(W_\xi)\cong W_r(\lambda(\xi))\]
 as $T_r(\mathfrak{sl}_{n})$-modules.
\end{thmt}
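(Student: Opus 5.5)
By Lemma \ref{lem:trhighest}, $\bm y_\xi$ satisfies the defining relations \eqref{rela2} with $\lambda=\lambda(\xi)$. The universal property of $W_r(\lambda(\xi))$ then gives a surjective $T_r(\mathfrak{sl}_n)$-module homomorphism $W_r(\lambda(\xi))\to T_r(W_\xi)$ sending the cyclic generator $v$ to $\bm y_\xi$. The theorem therefore reduces to the inequality
\[\dim T_r(W_\xi)\ \ge\ \dim W_r(\lambda(\xi)).\]
For this I would use the known dimension of local Weyl modules for Takiff algebras of type $A$. The fastest route is to note that $W_r(\lambda)$ is a quotient of the current-algebra local Weyl module $W(\lambda)$ for $\mathfrak{sl}_n[t]$, because $\mathfrak{sl}_n\otimes t^r\C[t]$ acts trivially on $W_r(\lambda)$. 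Hence $\dim W_r(\lambda)\le \dim W(\lambda)=\prod_{i=1}^{n-1}\binom{n}{i}^{\lambda_i}$ by \cite{CL,FL}.

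The real content is a matching lower bound for $\dim T_r(W_\xi)$, where some care is needed. When $\lambda_1+\dots+\lambda_{n-1}=\xi_1\le r$, the truncation at $t^r$ should not cut the current Weyl module. The reason is that $W(\lambda)$ is graded with top degree $\sum_i\lambda_i\,i(n-i)/2$-type bounds, but, more usefully, it is generated from the fundamental factors: $W(\lambda)\cong$ a fusion product of $\xi_1$ modules of the form $\Lambda^{k}\C^n$, and $t^{\xi_1}$ acts by zero on it. So I would first establish that $\mathfrak{sl}_n\otimes t^{\xi_1}$ annihilates $W(\lambda)$, using $\xi_1\le r$ (which holds since $\xi\in\mathcal P_{n,r}$ means $\ell(\xi')=\xi_1\le r$). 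From this, $W_r(\lambda)=W(\lambda)$ and both dimensions equal $\prod_i\binom{n}{i}^{\lambda_i}$.

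For the lower bound on $T_r(W_\xi)$, I would realize it concretely. View the column index $j$ of $y_{i,j}$ as a ``Takiff degree'' direction. Then $\bm y_\xi=\prod_{c=0}^{\xi_1-1}\big(\prod_{i:\xi_i>c}y_{i,c}\big)$ is a product over columns $c$ of top wedges $\bm e^{(c)}:=y_{1,c}\cdots y_{\xi'_{c+1},c}$ of $\Lambda^{\xi'_{c+1}}\C^n$. Consider the $\mathfrak{gl}_n$-equivariant filtration of $V_{n,r}$ by the weighted degree $\sum(\text{column index})$. Its associated graded is $\bigotimes_{c=0}^{r-1}\Lambda(\C^n\otimes y_{\cdot,c})$, on which $E\otimes\overline{t^k}$ acts by shifting columns down by $k$. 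I would show that the associated graded of $T_r(W_\xi)$ contains $\bigotimes_c U(\mathfrak{sl}_n).\bm e^{(c)}\cong\bigotimes_c\Lambda^{\xi'_{c+1}}\C^n$. This is done by induction on columns: starting from the rightmost column, the operators $E_{i,j}\otimes\overline{t^k}$, modulo lower filtration, let one move column $c$ independently of earlier columns, essentially via the fusion-product argument of Feigin--Loktev. This gives $\dim T_r(W_\xi)\ge\prod_c\binom{n}{\xi'_{c+1}}=\prod_i\binom{n}{i}^{\lambda_i}$, since exactly $\lambda_i$ columns have height $i$.

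The main obstacle is this last step: proving that the cyclic span is as large as the full tensor product of column wedges, rather than just their Cartan component. My first attempt would be a direct monomial spanning argument. Show that every product $\prod_c w_c$, with $w_c$ a wedge monomial of size $\xi'_{c+1}$ in column $c$, is obtained modulo lower filtration terms. To do this, apply ordered products $\prod_c(E_{i_c,j_c}\otimes\overline{t^{0}})$-type operators combined with $t^k$-operators that act only on columns $\ge k$. Then use triangularity with respect to a suitable order on tuples $(w_c)$ to conclude linear independence.
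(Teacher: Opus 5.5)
Your reduction is fine: the surjection $W_r(\lambda(\xi))\to T_r(W_\xi)$ together with $\dim W_r(\lambda)\le\dim W(\lambda)=\prod_i\binom{n}{i}^{\lambda_i}$ reduces the theorem to $\dim T_r(W_\xi)\ge\prod_i\binom{n}{i}^{\xi_i-\xi_{i+1}}$. Your argument that $W_r(\lambda)=W(\lambda)$ is not needed, and $\sum_i\lambda_i=\xi_1-\xi_n$, not $\xi_1$. But this lower bound is the whole content of the theorem, and the mechanism you propose for it cannot work.

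The column weight $\sum(\text{column index})$ is a \emph{grading} on $V_{n,r}$, not just a filtration, since $E_{i,j}\otimes\overline{t^k}$ lowers it by exactly $k$. Hence $T_r(W_\xi)$ is a graded subspace. Its component of top weight $D=\sum_c c\,\xi'_{c+1}$ is $\mathcal{U}(\mathfrak{sl}_n).\bm y_\xi\cong L_{\mathfrak{gl}_n}(\xi)$, i.e.\ only the Cartan component. Every product $\prod_c w_c$, with $w_c$ a wedge of $\xi'_{c+1}$ variables from column $c$, has weight exactly $D$. So the claim that $T_r(W_\xi)$ (or its associated graded) contains $\bigotimes_c\Lambda^{\xi'_{c+1}}\C^n$ is false as soon as two columns have height strictly between $0$ and $n$. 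For the same reason, no triangularity argument in any monomial order can produce more than $\dim L_{\mathfrak{gl}_n}(\xi)$ leading monomials of that shape.

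Concretely, take $n=2$, $\xi=(2,0)$, $r=2$. Then $T_r(W_\xi)$ is spanned by $y_{1,0}y_{1,1}$, $y_{2,0}y_{1,1}+y_{1,0}y_{2,1}$, $y_{2,0}y_{2,1}$ (weight $1$) and $(E_{2,1}\otimes\overline{t}).\bm y_\xi=\pm y_{1,0}y_{2,0}$ (weight $0$). The monomial $y_{2,0}y_{1,1}$ is never reached, and the fourth dimension sits in a configuration with two entries in column $0$ and none in column $1$.

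So the missing idea is a family of test vectors whose leading monomials have \emph{varying} column occupancy, spread over several weights. The paper builds them row by row:
\begin{itemize}
\item It applies $\prod_{j=1}^{n-1}\prod_{i=1}^{k_j}E_{n,j}\otimes\overline{t^{\xi_j-m_{j,k_j-i+1}-i}}$ to $\bm y_\xi$.
\item Using Lemma \ref{le:maxord} and the reverse-lexicographic order on the row-$n$ indices, it shows the leading term is a nonzero multiple of $\bm y_{\bm\xi_n-\bm k}\,\bm y_{n,(0)_{\xi_n}}\bm y_{n,\bm m_{n-1}}\cdots\bm y_{n,\bm m_1}$.
\item It applies the induction hypothesis for $T_r(\mathfrak{sl}_{n-1})$, which does not touch the variables $y_{n,\cdot}$, to the smaller partition $\bm\xi_n-\bm k$.
\item It sums the resulting bounds using $\binom{n}{i}=\binom{n-1}{i}+\binom{n-1}{i-1}$.
\end{itemize}
Your fusion-product picture is a reasonable heuristic for the graded character of $W(\lambda)$, but as a spanning statement inside $V_{n,r}$ it is wrong. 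Without a construction of this kind, the lower bound remains unproved.
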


Lemma \ref{lem:trhighest} implies that the $T_r(\mathfrak{sl}_n)$-module $T_r(W_\xi)$ is a quotient of $W_r(\lambda(\xi))$. In addition, 
it is known that  (see \cite[1.5.1]{CL} and \cite[Eq.~(2.2.5)]{FMM})
 \begin{align}\label{eq:dimwrlambdaxi}
     \dim W_r(\lambda(\xi))=\prod_{i=1}^{n-1}\binom{n}{i}^{\xi_i-\xi_{i+1}}.
 \end{align}
 Therefore, Theorem \ref{th:tak} follows from the following result.
 \begin{prpt}\label{prop:dimtak} One has that
 \begin{align}\label{dimtak}
     \dim T_r(W_\xi)\geq \prod_{i=1}^{n-1} \binom{n}{i}^{\xi_i-\xi_{i+1}}.
 \end{align}
 \end{prpt}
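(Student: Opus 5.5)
The plan is to exhibit $\prod_{i=1}^{n-1}\binom{n}{i}^{\xi_i-\xi_{i+1}}$ linearly independent vectors in $T_r(W_\xi)$, using a leading-term (triangularity) argument with respect to Grassmann monomials in the $y_{i,j}$. Write $c_j:=\xi'_{j+1}$ for $0\le j\le r-1$. These are the column lengths of $\xi$, so $n\ge c_0\ge c_1\ge\dots\ge c_{r-1}\ge 0$. Then
\[\bm y_\xi=\pm\prod_{j=0}^{r-1}\Big(\prod_{i=1}^{c_j}y_{i,j}\Big).\]
The number of $j$ with $c_j=i$ is exactly $\xi_i-\xi_{i+1}$ for $1\le i\le n-1$. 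Columns with $c_j=n$ or $c_j=0$ contribute a factor $1$. Hence the right-hand side of \eqref{dimtak} equals $\prod_{j=0}^{r-1}\binom{n}{c_j}$. This is the number of tuples $\mathbf S=(S_0,\dots,S_{r-1})$ with $S_j\subseteq\{1,\dots,n\}$ and $|S_j|=c_j$. For each such tuple put
\[y_{\mathbf S}:=\prod_{j=0}^{r-1}\prod_{i\in S_j}y_{i,j}.\]
These are pairwise distinct monomials, hence linearly independent. It therefore suffices to find, for each $\mathbf S$, a vector $w_{\mathbf S}\in T_r(W_\xi)$ whose expansion in the monomial basis has leading term $\pm y_{\mathbf S}$ with respect to a fixed total order on monomials.

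To construct $w_{\mathbf S}$, I would use the shift operators from \eqref{actak}. The operator $E_{i,k}\otimes\overline{t^m}$ acts as $\sum_s y_{i,s}\partial/\partial y_{k,s+m}$. It moves a variable from column $s+m$ to column $s$, changing its row from $k$ to $i$. In particular $E_{i,k}\otimes\overline{1}$ acts on every column simultaneously, while operators with $m\ge1$ shift variables to lower columns.

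The natural first attempt is an induction on the number of nonzero columns (equivalently on $\xi_1$). Let $p$ be the last column with $c_p>0$, and let $\tilde\xi$ be $\xi$ with that column removed. For $\tilde{\mathbf S}=(S_0,\dots,S_{p-1})$ the inductive hypothesis provides elements $u_{\tilde{\mathbf S}}\in\mathcal U(T_r(\mathfrak{sl}_n))$ whose action on $\bm y_{\tilde\xi}$ has leading term $y_{\tilde{\mathbf S}}$. I would apply the same words to $\bm y_\xi=\pm\,\bm y_{\tilde\xi}\cdot\prod_{i\le c_p}y_{i,p}$, and then apply a suitable product of operators $E_{a,b}\otimes\overline{1}$ to move the last column from $\{1,\dots,c_p\}$ to the prescribed $S_p$. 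The leading term would be controlled by the order that compares the column-$p$ subset first, and then the earlier columns lexicographically. Since the actions are derivations of the exterior algebra, Leibniz' rule lets me track how each factor transforms.

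The main obstacle is the interference caused by the degree-zero operators $E_{a,b}\otimes\overline 1$. They act on all columns at once, so moving the last column also disturbs the columns already arranged. Moreover, because $c_j\ge c_p$, the row $b$ being lowered is typically occupied in the earlier columns too. If this naive approach fails, I would first try to order the operations the other way: fix the last column first using lowering operators $E_{a,b}\otimes\overline 1$ with $b\le c_p<a$, and then rearrange earlier columns using operators whose effect on column $p$ is killed. In particular, operators $E_{a,b}\otimes\overline{t^m}$ with $m\ge1$ annihilate column $r-1$-type contributions, and a Vandermonde-type trick combining different $m$ might isolate individual columns. I would then verify that the unwanted terms are strictly smaller in the chosen monomial order.

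If that also fails, the fallback is to compute the $\mathfrak{gl}_n$-character of $T_r(W_\xi)$. Since $T_r(W_\xi)$ is graded by the $\mathfrak{gl}_r$-weight $\sum_s s\,\deg_s$, I would show that each graded piece contains the $\mathfrak{gl}_n$-highest weight vectors $\bm y_\zeta$ forced by Proposition \ref{jointh} in the required multiplicities.
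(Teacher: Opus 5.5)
Your reduction asks for something that is false in general. For no total order do there exist vectors in $T_r(W_\xi)$ whose leading monomials are all the $y_{\mathbf S}$ with $|S_j|=c_j$.

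\textbf{Why the plan cannot work.} Write $\deg_s$ for the number of variables in column $s$ and set $D=\sum_s s\deg_s$.
\begin{itemize}
\item Every $y_{\mathbf S}$ has the column content of $\bm y_\xi$, so it has the top value $D_0=\sum_j jc_j$.
\item Each $E_{i,k}\otimes\overline{t^m}$ lowers $D$ by exactly $m$. Hence $T_r(W_\xi)$ is $D$-graded, and its degree-$D_0$ component is $\mathcal U(\mathfrak{sl}_n\otimes\overline{1}).\bm y_\xi\cong L(\lambda(\xi))$.
\item Suppose $w_{\mathbf S}$ had leading term $\pm y_{\mathbf S}$. The coordinate projections of the $w_{\mathbf S}$ onto $\mathrm{span}\{y_{\mathbf S}\}$ would keep these leading terms, so they would be linearly independent. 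This projection only sees the degree-$D_0$ component, so you would get $\prod_j\binom{n}{c_j}\le\dim L(\lambda(\xi))$.
\item That inequality fails as soon as two of the $c_j$ lie strictly between $0$ and $n$. In that case $L(\lambda(\xi))$ is a proper summand of $\bigotimes_j\Lambda^{c_j}\mathbb{C}^n$.
\end{itemize}

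\textbf{A concrete counterexample.} Take $n=r=2$ and $\xi=(2,0)$. Then $T_r(W_\xi)$ is spanned by $y_{1,0}y_{1,1}$, $y_{2,0}y_{1,1}+y_{1,0}y_{2,1}$, $y_{2,0}y_{2,1}$ and $(E_{2,1}\otimes\overline{t}).\bm y_\xi=y_{1,0}y_{2,0}$. The last vector is itself a monomial, so it is a leading monomial for every order. Hence at most three of your four $y_{\mathbf S}$ can occur.

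So no choice of order, induction on columns, or Vandermonde trick can rescue the plan. The real obstacle is not interference from the degree-zero operators. The vectors beyond $L(\lambda(\xi))$ come from operators with $m\ge1$ and necessarily live in lower $D$-degree. The leading monomials you aim for must therefore have different column contents.

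\textbf{The fallback.} It is not an argument. You do not say which $\mathfrak{gl}_n$-highest weight vectors should occur in which $D$-degree, or with what multiplicities. Determining this amounts to knowing the graded character of $W_r(\lambda(\xi))$. Moreover, the $\bm y_\zeta$ of Proposition \ref{jointh} are joint $\mathfrak{gl}_n\times\mathfrak{gl}_r$ highest weight vectors, not all the $\mathfrak{gl}_n$-highest weight vectors of a graded piece.

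\textbf{What the paper does.} It uses leading monomials of the right shape.
\begin{itemize}
\item For $n=2$ it applies $\prod_{i=1}^k E_{2,1}\otimes\overline{t^{\xi_1-m_{k+1-i}-i}}$ to $\bm y_\xi$, with $\bm m\in\mathbb{N}_{\mathrm{ord}}^k(\xi_2,\xi_1-1)$. Using Lemma \ref{le:maxord} and the reverse lexicographic order on the row-$2$ indices, the leading term is $\bm y_{1,(0)_{\xi_1-k}}\bm y_{2,(0)_{\xi_2}}\bm y_{2,\bm m}$. Row $1$ loses its top $k$ boxes and row $2$ gains an arbitrary $k$-subset of $\{\xi_2,\dots,\xi_1-1\}$. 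In the example above this puts $y_{1,0}y_{2,0}$ among the leading terms.
\item For general $n$ the induction is on $n$, not on columns. Operators $E_{n,j}\otimes\overline{t^{\,\cdot}}$ move $k_j$ top boxes of row $j$ into row $n$. This gives a leading term $\bm y_{\bm\xi_n-\bm k}$ times a row-$n$ monomial.
\item Then $T_r(\mathfrak{sl}_{n-1})$, which does not touch row $n$, is applied, together with the induction hypothesis for $\bm\xi_n-\bm k$. Pascal's rule $\binom{n}{i}=\binom{n-1}{i}+\binom{n-1}{i-1}$ finishes the count.
\end{itemize}
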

 
 The rest of this section is devoted to a proof of Proposition \ref{prop:dimtak}. We will prove the proposition by constructing $\prod_{i=1}^{n-1} \binom{n}{i}^{\xi_i-\xi_{i+1}}$ linearly independent vectors in $T_r(W_\xi)$. To this end, we first introduce some notations.  Let $``\preceq"$ denote the reversed lexicographic (total) order on $\mathbb{N}^k$ for $k\in\mathbb{N}$. More precisely, for 
\[
\bm i=(i_1,\dots,i_k),\quad \bm j=(j_1,\dots,j_k)\in \mathbb{N}^k,
\]
we write $\bm i\preceq \bm j$ if either $\bm i=\bm j$, or the last non-vanishing difference $i_s-j_s$ ($1\le s\le k$) is negative. In the latter case, we write $\bm i\prec \bm j$.

Let $m,p,q\in\N$ with $p\le q$. Denote by  $\mathbb{N}_{\mathrm{ord}}^m$ the subset of $\N^m$ consisting of strictly increasing $m$-tuples,
and by $\mathbb{N}_{\mathrm{ord}}^m(p,q)\subseteq \mathbb{N}_{\mathrm{ord}}^m$ the set consisting of those $\bm k=(k_1,\dots,k_m)$ satisfying the condition 
\[
p\le k_1<\cdots<k_m\le q.
\]
When $m>0$ and $q=p+m-1$, we have  
\[\mathbb{N}_{\mathrm{ord}}^m(p,q)=\{(p)_m\}\quad ((p)_m:=(p,p+1,\dots,p+m-1)).\] 
Assume further that $1\leq m\leq q-p+1$. For every $\bm{d}=(d_1,d_2,\dots,d_{m})\in \N^{m}$, we define $S(p,q,\bm{d})$ 
to be the subset of $\mathbb{N}_{\mathrm{ord}}^{m}$ consisting of all $\bm u=(u_1,u_2,\dots,u_{m})$
such that 
\[(u_1+d_{\sigma(1)},u_2+d_{\sigma(2)},\dots,u_{m}+d_{\sigma(m)})\in \mathbb{N}_{\mathrm{ord}}^{m}(p,q)\quad \te{for some }\sigma\in S_m.\]



 \begin{lemt}\label{le:maxord}
 Assume that $d_1\leq d_2\leq \dots \leq d_m\leq q-p-m+1$. Then 
 \[(q-m+1-d_{m},\dots,q-1-d_2,q-d_1)\]
is the unique maximal element in $S(p,q,\bm{d})$ with respect to the  order $``\preceq"$ on $\N^{m}$. 
 \end{lemt}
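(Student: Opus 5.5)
We need to show two things: that the candidate $\bm u^{*}:=(q-m+1-d_m,\dots,q-1-d_2,q-d_1)$ lies in $S(p,q,\bm d)$, and that every $\bm u\in S(p,q,\bm d)$ satisfies $\bm u\preceq\bm u^{*}$. Uniqueness of the maximal element is then automatic, since $\preceq$ is a total order.

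\emph{Membership.} Write $u^*_s=q-m+s-d_{m+1-s}$. Take $\sigma$ to be the order-reversing permutation $\sigma(s)=m+1-s$. Then
\[u^*_s+d_{\sigma(s)}=q-m+s,\]
so $(u^*_1+d_{\sigma(1)},\dots,u^*_m+d_{\sigma(m)})=(q-m+1,\dots,q)$. This tuple lies in $\mathbb N^m_{\mathrm{ord}}(p,q)$ because $q-m+1\ge p$, which holds since $m\le q-p+1$. It remains to check that $\bm u^*\in\mathbb N^m_{\mathrm{ord}}$.
\begin{itemize}
\item Non-negativity: $u^*_1=q-m+1-d_m\ge p\ge0$, using $d_m\le q-p-m+1$.
\item Strict increase: $u^*_{s+1}-u^*_s=1+d_{m+1-s}-d_{m-s}\ge1$, using $d$ non-decreasing.
\end{itemize}

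\emph{Maximality.} Let $\bm u\in S(p,q,\bm d)$ with witnessing permutation $\sigma$, so $k_s:=u_s+d_{\sigma(s)}$ satisfies $p\le k_1<\dots<k_m\le q$. In particular $k_s\le q-m+s$ for every $s$, hence
\[u_s\le q-m+s-d_{\sigma(s)}.\]

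We compare $\bm u$ and $\bm u^*$ from the last coordinate downward, as the reversed lexicographic order requires. Suppose $u_t=u^*_t$ for all $t>s$; we claim $u_s\le u^*_s$.

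The idea is to show by downward induction that agreement in the top coordinates forces $\sigma$ to be essentially order-reversing there. For each $t>s$, equality $u_t=u^*_t$ gives
\[q-m+t-d_{m+1-t}=u_t\le q-m+t-d_{\sigma(t)},\]
so $d_{\sigma(t)}\le d_{m+1-t}$. Starting from $t=m$ and moving down, this shows that the multiset $\{d_{\sigma(t)}:t>s\}$ is bounded entrywise by $\{d_1,\dots,d_{m-s}\}$. A counting argument then shows that $\sigma(\{s+1,\dots,m\})$ can be chosen to avoid the indices carrying the largest values. Concretely, among the $s$ indices $\sigma(1),\dots,\sigma(s)$, at least one has $d$-value $\ge d_{m+1-s}$, and we may take it to be $\sigma(s)$.

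To justify taking it at position $s$, compare with the values at positions $\le s$. Those positions have $k_i\le k_s-(s-i)$, so $u_i\le k_s-(s-i)-d_{\sigma(i)}$. Combined with $\bm u$ strictly increasing, this lets us transfer the bound. The sharp form is
\[u_s\le k_s-\max_{i\le s}\bigl(d_{\sigma(i)}-(s-i)\bigr)\ \text{-type estimate},\]
and together with $k_s\le q-m+s$ it yields $u_s\le q-m+s-d_{m+1-s}=u^*_s$.

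Hence either $\bm u=\bm u^*$, or at the last index where they differ we have $u_s<u^*_s$; that is, $\bm u\preceq\bm u^*$.

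The main obstacle is this inductive step: controlling $\sigma$ on the top coordinates only through the equalities $u_t=u^*_t$. Ties among the $d_i$ require care here. The approach is to work with multisets of $d$-values rather than with indices, and to use the strict monotonicity of both $\bm u$ and $\bm k$. The key inequality is
\[u_s<u_{s+1}=u^*_{s+1}\quad\text{together with}\quad k_s<k_{s+1},\]
which pins down the largest remaining available $d$-value.
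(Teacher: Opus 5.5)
Your check that $\bm u^*\in S(p,q,\bm d)$ is correct, and comparing coordinates from the last one downward is a workable plan. However, the inductive step, which carries the whole maximality claim, is not proved.

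You claim that ``at least one of $\sigma(1),\dots,\sigma(s)$ has $d$-value $\ge d_{m+1-s}$, and we may take it to be $\sigma(s)$.'' The first half is trivially true and too weak. The second half is unjustified: $\sigma$ is a witness tied to the fixed tuple $\bm u$ through $\bm k$, so you cannot reassign it without re-checking that the new $\bm k$ lies in $\N^m_{\mathrm{ord}}(p,q)$.

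Your proposed justification also fails. Strict increase of $\bm u$ gives $u_i<u_s$, i.e.\ lower bounds on $u_s$, so upper bounds on the $u_i$ with $i<s$ cannot be transferred to $u_s$. Moreover, the displayed estimate $u_s\le k_s-\max_{i\le s}\bigl(d_{\sigma(i)}-(s-i)\bigr)$ is false. Take $m=2$, $p=0$, $q=10$, $\bm d=(0,5)$, $\sigma(1)=2$, $\sigma(2)=1$, $\bm k=(5,6)$. Then $\bm u=(0,6)\in S(p,q,\bm d)$, but at $s=2$ (where your induction hypothesis is vacuous) the right-hand side equals $2<u_2$.

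The missing idea is that the induction must carry equalities, not only your inequalities $d_{\sigma(t)}\le d_{m+1-t}$. Prove by downward induction that $d_{\sigma(t)}\ge d_{m+1-t}$ at each stage; this is exactly what gives $u_t=k_t-d_{\sigma(t)}\le q-m+t-d_{m+1-t}=u^*_t$. Then $u_t=u^*_t$ forces both $k_t=q-m+t$ and $d_{\sigma(t)}=d_{m+1-t}$.

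So if $u_t=u^*_t$ for all $t>s$, the multiset $\{d_{\sigma(t)}:t>s\}$ is exactly $\{d_1,\dots,d_{m-s}\}$. Hence the complementary multiset $\{d_{\sigma(i)}:i\le s\}$ is $\{d_{m+1-s},\dots,d_m\}$. In particular $d_{\sigma(s)}\ge d_{m+1-s}$ holds for the given $\sigma$, with no reassignment, and ties among the $d_i$ cause no trouble because you work with multisets of values. This yields $u_s\le u^*_s$.

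Equivalently, and closer to the paper's one-line argument via the maximality of $(q-m+1)_m$: $\preceq$ is a translation-invariant total order on $\Z^m$, and $\bm u=\bm k-(d_{\sigma(1)},\dots,d_{\sigma(m)})$ with $\bm k\preceq(q-m+1)_m$. Since $(d_m,\dots,d_1)$ is the $\preceq$-smallest rearrangement of $\bm d$, it follows that $\bm u\preceq(q-m+1)_m-(d_m,\dots,d_1)=\bm u^*$.
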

\begin{proof}
The assertion follows from the fact that 
\[(q-m+1)_m=(q-m+1,q-m+2,\dots,q)\]
is the unique maximal element in $\mathbb{N}_{\mathrm{ord}}^{m}(p,q)$  with respect to  $``\preceq"$.
\end{proof}

Let $1\le i\le n$, $0\le m\le r$, and $\bm j=(j_1,\dots,j_m)\in \mathbb{N}_{\mathrm{ord}}^{m}(0,r-1)$.
Set $\bm y_{i,\bm j}=1$ if $m=0$, and 
\[ \bm y_{i,\bm j}:=y_{i,j_1}\cdots y_{i,j_m}\in V_{n,r}\]
if $m>0$. 
Then the set
\[
\{\bm y_{1,\bm j_1}\bm y_{2,\bm j_2}\cdots \bm y_{n,\bm j_{n}}\mid \bm j_t\in \cup_{m=0}^{r}\mathbb{N}_{\mathrm{ord}}^{m}(0,r-1),~1\leq t\le n\}
\]
forms a basis of $V_{n,r}$.

We first consider the case that $n=2$. Introduce the  elements 
\begin{align*}
Y(k,\bm{m})=\left(\prod_{i=1}^{k}E_{2,1}\otimes \overline{t^{\xi_1-m_{k+1-i}-i}}\right).\bm{y}_{\xi}\in T_r(W_\xi),
\end{align*}
where $0\leq k\leq \xi_1-\xi_2$ and $\bm{m}=(m_1,\dots,m_{k})\in\N_{\textrm{ord}}^k(\xi_2,\xi_1-1)$. These elements are well-defined since $[E_{2,1}\otimes \overline{t^i}, E_{2,1}\otimes \overline{t^j}]=0$ for all $0\leq i,j\le r-1$.

\begin{prpt}\label{sl2}
Let $\xi=(\xi_1,\xi_2)\in\mathcal{P}_{2,r}$. Then the elements $Y(k,\bm{m})$ for $0\leq k\leq \xi_1-\xi_2$ and $\bm{m}=(m_1,\dots,m_{k})\in\N_{\mathrm{ord}}^k(\xi_2,\xi_1-1)$ are linearly independent in $T_r(W_\xi)$.
\end{prpt}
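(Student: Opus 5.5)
The plan is to prove linear independence by a weight argument combined with a triangularity argument in the monomial basis
\[
\{\bm y_{1,\bm j_1}\bm y_{2,\bm j_2}\}
\]
of $V_{2,r}$. By \eqref{actak}, each factor acts as
\[
E_{2,1}\otimes \overline{t^{d}}=\sum_{s=0}^{r-d-1} y_{2,s}\,\partial/\partial y_{1,s+d}.
\]
So $Y(k,\bm m)$ has $E_{1,1}$-weight $\xi_1-k$. Vectors with different $k$ therefore lie in different weight spaces, and it suffices to fix $k$ and show that the $Y(k,\bm m)$, for $\bm m\in\N^k_{\mathrm{ord}}(\xi_2,\xi_1-1)$, are linearly independent.

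\textbf{Step 1: describe the monomials in $Y(k,\bm m)$.} Set $d_i:=\xi_1-m_{k+1-i}-i$ for $1\le i\le k$. Since $\bm m$ is strictly increasing, $d_{i+1}-d_i=m_{k+1-i}-m_{k-i}-1\ge 0$, so $d_1\le\cdots\le d_k$. Expanding the product of the $k$ operators applied to $\bm y_\xi$, each surviving term arises as follows.
\begin{itemize}
\item It removes $k$ distinct variables $y_{1,c}$, with $c\in\{0,\dots,\xi_1-1\}$.
\item It creates $k$ distinct variables $y_{2,u}$, with $u\notin\{0,\dots,\xi_2-1\}$, so $u\ge \xi_2=:p$.
\item The two sets are matched by $c=u+d_{\sigma(\cdot)}$ for some $\sigma\in S_k$.
\end{itemize}
Writing the new row-2 indices increasingly as $\bm u$, the tuple of removed indices sorted accordingly lies in $\N^k_{\mathrm{ord}}(p,q)$ with $q=\xi_1-1$. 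Thus every monomial occurring in $Y(k,\bm m)$ has row-2 part $\{0,\dots,\xi_2-1\}\cup\{\bm u\}$ with $\bm u\in S(p,q,\bm d)$. Also $d_k=\xi_1-m_1-k\le q-p-k+1$ because $m_1\ge\xi_2$, so Lemma \ref{le:maxord} applies.

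\textbf{Step 2: identify the leading term.} By Lemma \ref{le:maxord}, the $\preceq$-maximal $\bm u$ is the tuple whose $j$-th entry is
\[
q-(k-j)-d_{k+1-j}=m_j .
\]
Hence the leading row-2 index set of $Y(k,\bm m)$ is exactly $\{0,\dots,\xi_2-1\}\cup\{m_1,\dots,m_k\}$, which determines $\bm m$. I would then order the monomials by the $\preceq$-order of their new row-2 tuple. This makes the transition matrix from $\{Y(k,\bm m)\}_{\bm m}$ to monomials triangular: $Y(k,\bm m)$ only involves tuples $\bm u\preceq\bm m$, and it contains tuple $\bm m$ itself. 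Linear independence then follows once the leading coefficient is nonzero.

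\textbf{Step 3: the main obstacle, nonvanishing of the leading coefficient.} Fix the specific leading monomial whose row-1 part is $\{0,\dots,\xi_1-1\}\setminus\{q-k+1,\dots,q\}$ and whose new row-2 indices are $\bm m$. Its coefficient is a signed sum over the orderings in which the $k$ commuting operators produce this monomial. The step I expect to be delicate is to show that these contributions do not cancel, taking into account Grassmann signs and repeated values among the $d_i$ (when $m_{j+1}=m_j+1$).

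The first thing I would try is the following. The constraints $m_j+d_{\sigma(j)}\in\{q-k+1,\dots,q\}$ with all sums distinct, together with $m_j\ge m_{j-1}+1$ and $d$ nondecreasing, force $m_j+d_{\sigma(j)}=q-k+j$ for every $j$. So $\sigma$ is unique up to permuting indices with equal $d$-values. The resulting terms are indexed by the ways of assigning identical operators to positions, and I would check that they carry the same sign. The point is that each operator $y_{2,s}\partial/\partial y_{1,s+d}$ moves an even-degree pair. So the sign depends only on the final monomial and not on the order of application, and the coefficient is a positive integer (a product of factorials of the multiplicities of the $d_i$).

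Finally, I would vary $\bm m$ and observe that distinct $\bm m$ give distinct leading monomials. This yields linear independence of the $Y(k,\bm m)$ for each fixed $k$, and hence for all $k$ by the weight argument.
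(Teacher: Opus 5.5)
Your route is the paper's: triangularity in the monomial basis with respect to $\preceq$ on the new row-$2$ indices, Lemma~\ref{le:maxord} to identify $\bm m$ as the leading index tuple, and a nonzero leading coefficient. Separating different $k$ by the $E_{1,1}$-weight is harmless; the paper gets this for free because the leading monomials have different row-$1$ degrees.

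There is one false claim in Step~1, and the paper's displayed expansion (summed over $\bm p\in S(\xi_2,\xi_1-1,\bm d)$) contains the same slip. An occurring tuple $\bm u$ need not lie in $S(p,q,\bm d)$, because the pairing of the created indices $u_j$ with the removed indices $c_j=u_j+d_{\sigma(j)}$ need not be order preserving. For example, take $\xi=(4,0)$, $r=4$, $k=2$ and $\bm m=(0,3)$, so $\bm d=(0,2)$. Then $y_{2,2}\frac{\partial}{\partial y_{1,2}}\,y_{2,1}\frac{\partial}{\partial y_{1,3}}.\bm y_\xi=-y_{1,0}y_{1,1}y_{2,1}y_{2,2}$ is the only contribution to this monomial in $Y(2,\bm m)$. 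Yet $(1,2)\notin S(0,3,(0,2))$, since neither $(1,4)$ nor $(3,2)$ lies in $\N^2_{\mathrm{ord}}(0,3)$. So Lemma~\ref{le:maxord} does not by itself bound all occurring tuples.

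The bound you actually need, $\bm u\preceq\bm m$ for every occurring $\bm u$, is still true. It follows from a direct greedy argument that bypasses $S(p,q,\bm d)$:
\begin{itemize}
\item Since $c_k\le q$ and $d_{\sigma(k)}\ge d_1$, you get $u_k\le q-d_1=m_k$.
\item If equality holds, then $c_k=q$ and one copy of the value $d_1$ is used up. Hence $c_{k-1}\le q-1$ and $d_{\sigma(k-1)}\ge d_2$, giving $u_{k-1}\le q-1-d_2=m_{k-1}$.
\item Continue downward in the same way.
\end{itemize}
Equality at every stage forces the removed set to be $\{q-k+1,\dots,q\}$, with $m_j$ paired to $q-k+j$. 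This is exactly the uniqueness you use in Step~3.

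The operators $y_{2,s}\,\partial/\partial y_{1,c}$ pairwise commute, and each simply replaces $y_{1,c}$ by $y_{2,s}$ in place. So all contributions to the leading monomial are the same vector, and its coefficient is $\pm$ a product of factorials of the multiplicities of the $d_i$, hence nonzero. With Step~1 replaced by this argument, your proof is complete.
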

\begin{proof}
It is clear that the assertion holds for the case that $\xi_1=\xi_2$. Now assume that $\xi_1\neq \xi_2$.
Let $0\leq k\leq \xi_1-\xi_2$ and $\bm{m}=(m_1,\dots,m_{k})\in\N_{\textrm{ord}}^k(\xi_2,\xi_1-1)$. 
For $1\leq i\leq k$, we set $d_i=\xi_1-m_{k+1-i}-i$ and write
$\bm{d}=(d_1,d_2,\dots,d_k)$. Then one easily verifies  that 
$d_1\leq d_2\leq\dots\le d_k\leq \xi_1-\xi_2-k.$

For every $\bm u\in \N_{\textrm{ord}}^k(\xi_2,\xi_1-1)$ and $\sigma\in S_k$, we have that 
\[\bm{u}\preceq (m_1+d_{\sigma(k)}, m_2+d_{\sigma(k-1)},\dots ,m_k+d_{\sigma(1)}).\]
Furthermore, the above is an equality  if and only if $\bm{u}=(\xi_1-k,\xi_1-k+1,\dots,\xi_1-1)$ and $d_{\sigma(i)}=d_i$ for $1\leq i\leq k$. This, together with \eqref{actak} and  Lemma \ref{le:maxord}, implies that 
\begin{align*}
&Y(k,\bm m)\\
=\ &\sum_{\bm p\in S(\xi_2,\xi_1-1,\bm{d})}\sum_{\sigma\in S_k}\left(y_{2,p_1}\frac{\partial}{\partial y_{1,p_1+d_{\sigma(k)}}}y_{2,p_2}\frac{\partial}{\partial y_{1,p_2+d_{\sigma(k-1)}}}\dots y_{2,p_k}\frac{\partial}{\partial y_{1,p_k+d_{\sigma(1)}}}\right).\bm{y}_{\xi}\\
 =\ &c_{\bm m} \bm{y}_{1,(0)_{\xi_1-k}}\bm{y}_{2,(0)_{\xi_2}}\bm{y}_{2,\bm{m}}+ \sum_{\substack{\bm{q}\in \N_{\textrm{ord}}^{\xi_1-k}(0,\xi_1-1),\\ 
\bm{p}\prec \bm{m}}}c_{\bm{q},\bm{p}}\bm{y}_{1,\bm q}\bm{y}_{2,(0)_{\xi_2}}\bm{y}_{2,\bm{p}},
\end{align*}
where $c_{\bm{q},\bm{p}}\in\C$ and $c_{\bm m}\in\C^{\times}$.  The proposition  then follows immediately.
\end{proof}

\noindent\textbf{Proof of Proposition \ref{prop:dimtak}:}
We prove the theorem by using induction on $n$. When $n=2$, it follows from Proposition \ref{sl2} that 
\[\textrm{dim }T_r(W_\xi)\geq \sum_{k=0}^{\xi_1-\xi_2}\textrm{Card\ } \N_{\textrm{ord}}^k(\xi_2,\xi_1-1)=\sum_{k=0}^{\xi_1-\xi_2}\binom{\xi_1-\xi_2}{k}=2^{\xi_1-\xi_2},\]
as desired.  

Now assume that $n>2$. We set 
\[\N(\xi)=\{\bm{k}=(k_1,k_2,\dots,k_{n-1})\in\N^{n-1}\mid 0\leq k_i\leq \xi_i-\xi_{i+1},~1\leq i\leq n-1\},\]
and introduce the following elements
\begin{align*}
Y(\bm{k},\bm{m}_1,\dots,\bm{m}_{n-1})=\prod_{j=1}^{n-1}\left(\prod_{i=1}^{k_j}E_{n,j}\otimes \overline{t^{\xi_j-m_{j,k_j-i+1}-i}}\right).\bm{y}_{\xi}\in T_r(W_\xi),
\end{align*}
where $\bm{k}\in \N(\xi)$ and $\bm{m}_i=(m_{i,1},\dots,m_{i,k_i})\in\N_{\te{ord}}^{k_i}(\xi_{i+1},\xi_i-1)$ for $1\leq i\leq n-1$.

We claim that these elements are linearly independent in $T_r(W_\xi)$. Indeed, by a similar argument as that in Proposition \ref{sl2}, we have 
\begin{align}\label{basis2}
Y(\bm{k},\bm{m}_1,\dots,\bm{m}_{n-1})\nonumber=& \ c\bm{y}_{\bm\xi_n-\bm k}\bm{y}_{n,(0)_{\xi_{n}}}\bm{y}_{n,\bm{m}_{n-1}}\bm{y}_{n,\bm{m}_{n-2}}\cdots \bm{y}_{n,\bm{m}_1}\nonumber\\
&+\sum_{\substack{\bm{j}\in\N_{\textrm{ord}}^{\xi_{n}+\sum_{i=1}^{n-1}k_i}(0,r-1),\\ \bm{j}\prec ((0)_{\xi_{n}},\bm{m}_{n-1},\bm{m}_{n-2},\dots,\bm{m}_1)}}u_{\bm j}{\bm{y_{n,j}}},
\end{align}
where $c\in\C^{\times}$, $u_{\bm j}\in V_{n-1,r}(\subseteq V_{n,r})$, and 
\[\bm{\xi}_n-\bm k=(\xi_{1}-k_1,\xi_{2}-k_2,\dots,\xi_{n-1}-k_{n-1})\in\mathcal{P}_{n-1,r}.\]
This  implies the claim.

On the other hand, view $V_{n,r}$ as a $T_r(\mathfrak{sl}_{n-1})$-module by  restricting to $T_r(\mathfrak{sl}_{n-1})$ and consider the $T_r(\mathfrak{sl}_{n-1})$-submodule generated by $\bm{y}_{\bm\xi_n-\bm k}$.
By the  induction hypothesis, we have 
\begin{align}\label{hypo}
\dim \mathcal{U}(T_r(\mathfrak{sl}_{n-1})).\bm y_{\bm\xi_n-\bm k}\geq \prod_{i=1}^{n-2}\binom{n-1}{i}^{\xi_i-k_i-\xi_{i+1}+k_{i+1}}.
\end{align}
In addition, $T_r(\mathfrak{sl}_{n-1})$ acts trivially on $y_{n,j}$ for $0\leq j\leq r-1$. This, together with \eqref{basis2} and \eqref{hypo}, gives that 
\begin{eqnarray*}
\dim T_r(W_\xi)\geq&&
\sum_{\bm k\in\N(\xi)}\left(\prod_{i=1}^{n-2}\binom{n-1}{i}^{\xi_i-k_i-\xi_{i+1}+k_{i+1}}\right)\left(\prod_{j=1}^{n-1}\binom{\xi_j-\xi_{j+1}}{k_j}\right)\\
=&&\prod_{i=1}^{n-1}\left(\sum_{k_i=0}^{\xi_{i}-\xi_{i+1}}\binom{\xi_{i}-\xi_{i+1}}{k_i} \binom{n-1}{i}^{\xi_{i}-\xi_{i+1}-k_i}\binom{n-1}{i-1}^{k_{i}}\right)\\
 =&&\prod_{i=1}^{n-1}\binom{n}{i}^{\xi_{i}-\xi_{i+1}},
 \end{eqnarray*}
where in the last equality we used the fact that
\[\binom{n}{i}=\binom{n-1}{i}+\binom{n-1}{i-1}.\]
This completes the proof. \qed

\section{Proof of Theorem \ref{th:main}}\label{sec:6}
 This section is devoted to a proof of Theorem \ref{th:main}. 
We start with the following technical result (cf.\,\cite{CGT}).
\begin{lemt}\label{le:ann}
Let $m$ and $s$ be two non-negative integers with $m<s$, and let $g(t)$ be a polynomial  of degree $m$. 
Then
\[\sum\limits_{k=0}^s\binom{s}{k}(-1)^{s-k}g(k)=0\quad\text{and}\quad \sum\limits_{k=0}^s\binom{s}{k}(-1)^{s-k}k^s=s!.\]
\end{lemt}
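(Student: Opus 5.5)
The plan is to view the alternating sum as the $s$-th forward difference at $0$. Define the operator $\Delta$ on polynomials by $(\Delta g)(t)=g(t+1)-g(t)$, and let $E$ be the shift $(Eg)(t)=g(t+1)$, so that $\Delta=E-1$. Since $E$ and $1$ commute, the binomial theorem gives
\[(\Delta^s g)(t)=\sum_{k=0}^s\binom{s}{k}(-1)^{s-k}g(t+k).\]
Evaluating at $t=0$, both sums in Lemma \ref{le:ann} are $(\Delta^s g)(0)$: with $g$ the given polynomial of degree $m$ in the first case, and with $g(t)=t^s$ in the second.

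The key step is that $\Delta$ lowers degree by exactly one on nonconstant polynomials and kills constants. Indeed, if $g(t)=c\,t^d+(\text{lower terms})$ with $d\ge 1$, then expanding $(t+1)^d-t^d$ shows
\[\Delta g(t)=c\,d\,t^{d-1}+(\text{lower terms}).\]
Iterating this, $\Delta^{m}g$ is the constant $m!$ times the leading coefficient of $g$, so $\Delta^{m+1}g=0$. Since $s>m$, we get $\Delta^s g=0$, which proves the first identity. For $g(t)=t^s$, iterating $s$ times gives the constant $s!\cdot 1=s!$, and hence $(\Delta^s t^s)(0)=s!$, which is the second identity.

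There is no real obstacle. The only care needed is the bookkeeping of the leading coefficient under $\Delta$ (namely $c\mapsto cd$) and the edge case of constant $g$ (where $m=0$), for which $\Delta g=0$ already. One could alternatively argue by linearity, reducing to $g(t)=\binom{t}{j}$ with $j\le m$, but the forward-difference argument above is the route I would take.
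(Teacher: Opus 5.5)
Your proof is correct: writing the sum as $(\Delta^s g)(0)$ with $\Delta=E-1$, and using that $\Delta$ lowers the degree by exactly one while multiplying the leading coefficient by the degree, gives both identities at once. The paper does not prove this lemma itself but only refers to \cite{CGT}, so your standard forward-difference argument is a self-contained proof of the cited fact.
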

Write $I_{r,a}$ for the ideal of $\C[t,t^{-1}]$ generated by $(t-a)^{r}$. 

\begin{lemt}\label{le:ker}
One has that $\mathfrak{sl}_{n}\otimes I_{r,a}\subseteq \ker \psi_{a,n,r}$.
\end{lemt}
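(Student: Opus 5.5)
We need to show that $\psi_{a,n,r}(x\otimes f)=0$ for all $x\in\mathfrak{sl}_n$ and $f\in I_{r,a}$. Since $\psi_{a,n,r}$ is linear and $I_{r,a}$ is spanned by the Laurent polynomials $t^{p}(t-a)^r$ with $p\in\Z$, it suffices to take $x=E_{i,k}$ (in fact we will show the stronger statement for all of $\mathfrak{gl}_n$) and $f=t^p(t-a)^r$. Expanding by the binomial theorem,
\[
t^p(t-a)^r=\sum_{q=0}^{r}\binom{r}{q}(-a)^{r-q}t^{p+q},
\]
so by \eqref{eq:ac}
\[
\psi_{a,n,r}\bigl(E_{i,k}\otimes t^p(t-a)^r\bigr)=\sum_{j,s=0}^{r-1}\Bigl(\sum_{q=0}^{r}\binom{r}{q}(-a)^{r-q}a^{p+q}\,\ell_{j,r}(p+q+s)\Bigr)x_{i,j}\frac{\partial}{\partial x_{k,s}}.
\]

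The scalar factor is $a^{p+r}\sum_{q=0}^{r}\binom{r}{q}(-1)^{r-q}\ell_{j,r}(p+s+q)$. For fixed $j,p,s$, set $g(t)=\ell_{j,r}(p+s+t)$. This is a polynomial in $t$ of degree $r-1<r$. Applying the first identity of Lemma \ref{le:ann} with $s$ replaced by $r$ and $m=r-1$, the inner sum vanishes. Hence every coefficient is zero, and the operator is zero.

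This gives $\mathfrak{gl}_n\otimes I_{r,a}\subseteq\ker\psi_{a,n,r}$, and in particular the statement for $\mathfrak{sl}_n$. The only point needing care is that Lemma \ref{le:ann} is stated for a polynomial of exact degree $m<s$. Our $g$ has degree exactly $r-1$, and in any case the vanishing is linear in $g$ (it is the $r$-th finite difference), so it holds for every polynomial of degree at most $r-1$, including constants. There is no real obstacle: the whole argument reduces to the observation that $(t-a)^r$ acts as an $r$-th finite difference operator on degree-$(r-1)$ interpolation polynomials.
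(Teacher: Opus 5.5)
Your proof is correct. Its computational core matches the paper's: expand $(t-a)^r$ binomially and note that the coefficient of each $x_{i,j}\frac{\partial}{\partial x_{k,s}}$ is an $r$-th finite difference of the degree-$(r-1)$ polynomial $\ell_{j,r}$, which Lemma \ref{le:ann} kills.

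Where you differ is in passing from the generator to the whole ideal. The paper evaluates $\psi_{a,n,r}$ only on $E_{i,k}\otimes(t-a)^r$. It then uses that $\ker\psi_{a,n,r}$ is an ideal of $L(\mathfrak{gl}_n)$, by Proposition \ref{pr:homo}: bracketing with $E_{i,j}\otimes t^p$ gives $[\mathfrak{gl}_n,\mathfrak{gl}_n]\otimes t^p(t-a)^r\subseteq\ker\psi_{a,n,r}$. This is why the lemma is stated for $\mathfrak{sl}_n=[\mathfrak{gl}_n,\mathfrak{gl}_n]$.

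You instead check every spanning element $t^p(t-a)^r$ directly. Since the shift by $p$ does not change the degree of $g$, this costs nothing extra. It also avoids the homomorphism property entirely and yields the stronger inclusion $\mathfrak{gl}_n\otimes I_{r,a}\subseteq\ker\psi_{a,n,r}$. The paper's route, in exchange, needs only the single computation at $p=0$, because it reuses Proposition \ref{pr:homo}.

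Incidentally, your coefficient at $p=0$, namely $a^{r}\sum_{q=0}^{r}\binom{r}{q}(-1)^{r-q}\ell_{j,r}(s+q)$, is the correct form of the inner sum displayed in the paper's proof. As printed, with $\binom{r-1}{j_2}$ summed over $0\le j_2\le r-1$, that sum is only an $(r-1)$-th difference of $\ell_{j_1,r}$, which is nonzero. It should be the $r$-th difference you wrote.
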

\begin{proof}
By Lemma \ref{le:ann} and \eqref{eq:ac}, we have that
\begin{align*}
& \psi_{a,n,r}(E_{i,k}\ot (t-a)^{r})\\
=\ &(-a)^{r}\sum_{j_1,s_1=0}^{r-1}\left(\sum_{j_2=0}^{r-1}(-1)^{r-j_2-1}\binom{r-1}{j_2}\ell_{j_1,r}(r-j_2+s_1-1)\right)x_{i,j_1}\frac{\partial}{\partial x_{k,s_1}}\\
=\ &0,
\end{align*}
where $1\leq i,k\leq n$. 
It follows that \[\mathfrak{sl}_{n}\otimes I_{r,a}=[\mathfrak{gl}_{n},\mathfrak{gl}_{n}]\otimes I_{r,a}\subseteq \ker \psi_{a,n,r},\]
as desired.
\end{proof}

In view of Lemma \ref{le:ker}, the $L(\mathfrak{sl}_{n})$-module $V_{n,r}(a)$ descends to an 
$\mathfrak{sl}_{n}\otimes\left(\C[t,t^{-1}]/I_{r,a}\right)$-module. 
Then, via the canonical algebra isomorphism 
\[T_r(\mathfrak{sl}_{n})\to \mathfrak{sl}_{n}\otimes\left(\C[t,t^{-1}]/I_{r,a}\right), \quad x\otimes \overline{t^k}\mapsto x\otimes ((t-a)^k+I_{r,a}),\]
$V_{n,r}(a)$ becomes a $T_r(\mathfrak{sl}_{n})$-module, denoted by 
$\overline{V_{n,r}(a)}$. 

\begin{lemt}\label{lem:actionT-r}
     The action of $T_r(\mathfrak{sl}_{n})$ on $\overline{V_{n,r}(a)}$ is given by 
     \[
     E_{i,k}\ot \overline{t^m}\mapsto a^m \sum_{p=m}^{r-1}y_{i,p-m}\frac{\partial}{\partial y_{k,p}}\quad \text{for $1\leq i,k\leq n$ and $0\leq m\leq r-1$.}
     \]
  In particular, when $a=1$, it coincides with    \eqref{actak}.
\end{lemt}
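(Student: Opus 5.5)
The plan is to compute $\psi_{a,n,r}(E_{i,k}\ot (t-a)^m)$ directly and then rewrite it in the $y$-variables. By the canonical isomorphism, $E_{i,k}\ot\overline{t^m}$ acts on $\overline{V_{n,r}(a)}$ as $\psi_{a,n,r}(E_{i,k}\ot(t-a)^m)$. Expanding $(t-a)^m=\sum_{q=0}^m\binom{m}{q}(-a)^{m-q}t^q$ and using \eqref{eq:ac} gives
\[
\psi_{a,n,r}(E_{i,k}\ot(t-a)^m)=a^m\sum_{j,s=0}^{r-1}\Big(\sum_{q=0}^m\binom{m}{q}(-1)^{m-q}\ell_{j,r}(q+s)\Big)x_{i,j}\frac{\partial}{\partial x_{k,s}}
=a^m\sum_{j,s=0}^{r-1}(\Delta^m\ell_{j,r})(s)\,x_{i,j}\frac{\partial}{\partial x_{k,s}},
\]
where $\Delta$ is the forward difference operator $(\Delta g)(t)=g(t+1)-g(t)$. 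Both this operator and the claimed operator $\sum_p y_{i,p-m}\partial/\partial y_{k,p}$ are even derivations of the exterior algebra. Hence it suffices to check that they agree on the generators, that is, as linear maps from $\mathrm{span}\{x_{k,s}\}=\mathrm{span}\{y_{k,p}\}$ to $\mathrm{span}\{x_{i,j}\}$.

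To make this transparent, I will use a fixed $i$-independent identification of $U=\mathrm{span}\{x_{\cdot,s}\mid 0\le s\le r-1\}$ with the dual $P_{<r}^*$ of the space of polynomials of degree $<r$, given by $x_{\cdot,s}\mapsto \mathrm{ev}_s$. Since $\mathrm{ev}_c=\sum_j\ell_{j,r}(c)\,\mathrm{ev}_j$ on $P_{<r}$, the generator map $x_{\cdot,s}\mapsto\sum_j(\Delta^m\ell_{j,r})(s)x_{\cdot,j}$ becomes $\mathrm{ev}_s\mapsto \mathrm{ev}_s\circ\Delta^m$. In other words, it is precomposition with $\Delta^m$, which is well defined because $\Delta$ preserves $P_{<r}$. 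Next I identify the $y$'s. Using $\mathrm{ev}_j=\mathrm{ev}_0\circ(1+\Delta)^j=\sum_k\binom{j}{k}\mathrm{ev}_0\circ\Delta^k$ and comparing with the definition $x_{i,j}=\sum_k\binom jk y_{i,r-1-k}$, we see that $y_{\cdot,p}$ corresponds to $\mathrm{ev}_0\circ\Delta^{r-1-p}$. This agrees with \eqref{eq:xijpar}, because \eqref{eq:xijpar} is exactly the expansion of $\Delta^{r-1-p}=(\tau-1)^{r-1-p}$ with $\tau$ the shift.

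It follows that the generator map sends $y_{\cdot,p}\mapsto \mathrm{ev}_0\circ\Delta^{r-1-p+m}$. This equals $y_{\cdot,p-m}$ if $p\ge m$, and it vanishes if $p<m$, since then $r-1-p+m\ge r$ and $\Delta^r$ kills $P_{<r}$ (this is Lemma \ref{le:ann}). Consequently the derivation is $a^m\sum_{p=m}^{r-1}y_{i,p-m}\partial/\partial y_{k,p}$, which is the formula in the lemma. Setting $a=1$ and substituting $s=p-m$ recovers \eqref{actak}.

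The only delicate point is the bookkeeping in the identification. I need to confirm that the coefficient pattern $x_{i,j}\partial/\partial x_{k,s}$ in \eqref{eq:ac} really sends $x_{k,s}$ to $\sum_j c_{j,s}x_{i,j}$, and not the transpose. I also need to check that the passage from $E_{i,k}$ acting on the $\mathfrak{gl}_n$-index to the "evaluation" index is compatible for all $i,k$ simultaneously, which holds because the identification is independent of $i$. Once these are in place, everything else reduces to the binomial identities already recorded in \eqref{eq:xijpar} and Lemma \ref{le:ann}.
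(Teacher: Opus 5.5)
Your argument is correct, and it follows a different route from the paper's. The paper works in coordinates throughout. It expands $(t-a)^m$ and rewrites each $\partial/\partial x_{k,s}$ in terms of the $\partial/\partial y_{k,p}$. It then applies a Vandermonde convolution and collapses $\sum_{j}\ell_{j,r}(s)x_{i,j}$ to $x_{i,s}$ by interpolation; for $s\ge r$ this tacitly uses the polynomial extension of $s\mapsto x_{i,s}$. Finally it uses Lemma \ref{le:ann} to kill all but one term.

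You instead reduce to generators, which is legitimate because both operators are even derivations preserving the blocks $\mathrm{span}\{x_{k,s}\}_s=\mathrm{span}\{y_{k,p}\}_p$. You then identify each block with $P_{<r}^*$: the $x$'s become the nodal evaluations $\mathrm{ev}_s$, and the $y$'s become the Newton functionals $\mathrm{ev}_0\circ\Delta^{r-1-p}$. Under this identification $E_{i,k}\ot(t-a)^m$ acts by $a^m$ times precomposition with $\Delta^m$. The lemma then reduces to $\Delta^{r-1-p}\Delta^m=\Delta^{r-1-(p-m)}$ together with $\Delta^r|_{P_{<r}}=0$.

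The step you flag as delicate is fine. Applying $\sum_{j,s}c_{j,s}x_{i,j}\partial/\partial x_{k,s}$ to $x_{k,s_0}$ gives $\sum_j c_{j,s_0}x_{i,j}$. Expanding $\mathrm{ev}_s\circ\Delta^m=\sum_q\binom{m}{q}(-1)^{m-q}\mathrm{ev}_{s+q}$ and applying your identity $\mathrm{ev}_c=\sum_j\ell_{j,r}(c)\mathrm{ev}_j$ to each term reproduces exactly these coefficients, so no transpose appears.

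Your route has real advantages. The extension to evaluation points $s\ge r$ is automatic, since $\mathrm{ev}_s$ makes sense for every $s$. It also shows that $E_{i,k}(m)$ acts by $a^m$ times the pullback of the shift $g(t)\mapsto g(t+m)$. That observation gives one-line proofs of Proposition \ref{pr:homo} and Lemma \ref{le:ker}, the latter since $(t-a)^r\mapsto a^r(\Delta^r)^*=0$. It also explains why the variables $x_{i,j}$ were chosen. The paper's computation is more mechanical, but it needs no auxiliary identification and stays within the notation already set up.
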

\begin{proof}
By using Lemma \ref{le:ann} and \eqref{eq:xijpar}, we obtain that 
\begin{align*}\label{eq:opdes}
&E_{i,k}\ot \overline{t^m}\\
=\ &a^m\sum_{j_1,s_1=0}^{r-1}\left(\sum_{j_2=0}^{m}(-1)^{m-j_2}\binom{m}{j_2}\ell_{j_1,r}(j_2+s_1)\right)x_{i,j_1}\left(\sum_{p=0}^{r-s_1-1}\binom{r-p}{s_1}(-1)^{r-p-s_1-1}\frac{\partial}{\partial y_{k,p}}\right)\nonumber\\
=\ &a^m\sum_{j_1,p=0}^{r-1}\sum_{s_1=0}^{r-p-1}\left(\sum_{j_2=0}^{m}(-1)^{m-j_2+r-p-s_1-1}\binom{m}{j_2}\binom{r-p-1}{s_1}\ell_{j_1,r}(j_2+s_1)\right)x_{i,j_1}\frac{\partial}{\partial y_{k,p}}\nonumber\\
=\ &a^m\sum_{j_1,p=0}^{r-1}\sum_{s_1=0}^{m+r-p-1}\left((-1)^{m+r-p-s_1-1}\sum_{j_2=0}^{m}\binom{m}{j_2}\binom{r-p-1}{s_1-j_2}\ell_{j_1,r}(s_1)\right)x_{i,j_1}\frac{\partial}{\partial y_{k,p}}\nonumber\\
=\ & a^m\sum_{j_1,p=0}^{r-1}\sum_{s_1=0}^{m+r-p-1}\left((-1)^{m+r-p-s_1-1}\binom{m+r-p-1}{s_1
}\ell_{j_1,r}(s_1)\right)x_{i,j_1}\frac{\partial}{\partial y_{k,p}}\nonumber\\
=\ &a^m\sum_{p=0}^{r-1}\sum_{s_1=0}^{m+r-p-1}\left((-1)^{m+r-p-s_1-1}\binom{m+r-p-1}{s_1
}x_{i,s_1}\right)\frac{\partial}{\partial y_{k,p}}\nonumber\\
=\ &a^m\sum_{p=m}^{r-1}y_{i,p-m}\frac{\partial}{\partial y_{k,p}}\nonumber
\end{align*}
as operators on $\overline{V_{n,r}(a)}$.
\end{proof}

By applying Lemma \ref{lem:actionT-r}, the following is straightforward.

\begin{lemt}\label{lem:weylquotient}
The following relations hold in the $L(\mathfrak{sl}_n)$-module $V_{n,r}(a)$:
    \[
   L({\mathfrak{n}}_{n}^+).\bm y_{\xi}=0,\ h_i(m).\bm{y}_{\xi}=a^m(\xi_i-\xi_{i+1})\bm y_{\xi},\ \textrm{and}\ (E_{i+1,i}\otimes 1)^{\xi_i-\xi_{i+1}+1}.\bm y_{\xi}=0,
    \] where $1\leq i\leq n-1$ and $m\in \Z$.
\end{lemt}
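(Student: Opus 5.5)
The plan is to pass through the descended $T_r(\mathfrak{sl}_n)$-action on $\overline{V_{n,r}(a)}$ from Lemma \ref{lem:actionT-r}, where $E_{i,k}\otimes\overline{t^m}$ acts by $a^m\sum_{p=m}^{r-1}y_{i,p-m}\partial/\partial y_{k,p}$. Since $\overline{t^m}$ corresponds to the image of $(t-a)^m$, we recover $x(m')$ by expanding $t^{m'}=(a+(t-a))^{m'}$ modulo $I_{r,a}$. For $m'<0$ we use the inverse of $t$ in $\C[t]/(t-a)^r$, given by a truncated geometric series in $(t-a)/a$. In particular $x(m')$ acts as a combination of the operators $x\otimes\overline{t^j}$ with $0\le j\le r-1$, whose coefficient on $\overline{t^0}$ is $a^{m'}$. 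This expansion is legitimate because Lemma \ref{le:ker} shows that $\mathfrak{sl}_n\otimes I_{r,a}$ acts by zero.

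\textbf{Step 1: the relation $L(\mathfrak{n}_n^+).\bm y_\xi=0$.} For $i<k$, each summand $y_{i,p-m}\partial/\partial y_{k,p}$ kills $\bm y_\xi$. If $y_{k,p}$ does not occur in $\bm y_\xi$, the derivative is already zero. If it does occur, then $p\le \xi_k-1$, hence $p-m\le p\le\xi_k-1\le\xi_i-1$, so $y_{i,p-m}$ already occurs in $\bm y_\xi$ and the product vanishes in the exterior algebra. Combined with the expansion above, this gives $L(\mathfrak{n}_n^+).\bm y_\xi=0$.

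\textbf{Step 2: the Cartan action.} For $h_i\otimes\overline{t^m}$ with $m\ge1$, the operator $y_{i,p-m}\partial/\partial y_{i,p}$ replaces $y_{i,p}$ by $y_{i,p-m}$ with $p-m<p<\xi_i$. The variable $y_{i,p-m}$ is already present, so the result is zero; the same holds with $i+1$ in place of $i$. Hence only the $\overline{t^0}$ term survives. Since $\overline{t^0}$ acts as in \eqref{acgln}, it contributes the eigenvalue $\xi_i-\xi_{i+1}$, and therefore $h_i(m).\bm y_\xi=a^m(\xi_i-\xi_{i+1})\bm y_\xi$.

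\textbf{Step 3: the nilpotency relation.} The element $E_{i+1,i}\otimes 1$ is the finite $\mathfrak{gl}_n$ operator. By Proposition \ref{jointh}, $\bm y_\xi$ is a $\mathfrak{gl}_n$-highest weight vector of weight $\xi$, so it generates a finite-dimensional $\mathfrak{sl}_2$-module in which $h_i$ has eigenvalue $\xi_i-\xi_{i+1}$ on the highest weight vector. Standard $\mathfrak{sl}_2$-theory then gives $(E_{i+1,i}\otimes1)^{\xi_i-\xi_{i+1}+1}.\bm y_\xi=0$.

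The only point needing care is the reduction of a general $x(m')$ to the Takiff generators, including negative $m'$. Beyond that, everything is a direct check on monomials.
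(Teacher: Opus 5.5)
Your proposal is correct and is essentially the paper's argument. The paper only remarks that these relations are straightforward from Lemma~\ref{lem:actionT-r}, and you supply exactly the missing details: by Lemma~\ref{le:ker}, each $x(m')$ (including $m'<0$, via the truncated expansion of $t^{m'}$ in $\C[t]/(t-a)^r$) acts as a combination of the Takiff operators whose $\overline{t^0}$-coefficient is $a^{m'}$, after which the relations reduce to the monomial checks you carry out, with Step~3 following from standard $\mathfrak{sl}_2$-theory.
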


\vspace{3mm}

\noindent\textbf{Proof of Theorem \ref{th:main}: } 
 Lemma \ref{lem:weylquotient} implies that the $L(\mathfrak{sl}_{n})$-module
$W_{a,\xi}$ is a quotient of the local Weyl module $W(\bm\pi_{\lambda(\xi),a})$ (see Definition \ref{defweyl} and \eqref{eq:anotherhaction}). 
Thus, it suffices to prove that 
\[
\dim W_{a,\xi}=\dim W(\bm\pi_{\lambda(\xi),a}).
\] 
On the one hand, it was shown in  \cite[Section 1.5.1]{CL} that 
\[\dim W(\bm\pi_{\lambda(\xi),a})=\prod_{i=1}^{n-1}\binom{n}{i}^{\xi_i-\xi_{i+1}}.\]
On the other hand, by twisting the automorphism 
\[
L(\mathfrak{sl}_n)\rightarrow L(\mathfrak{sl}_n),\quad x(m)\mapsto a^m x(m),
\]
there is a new  $L(\mathfrak{sl}_n)$-module structure on  $V_{n,r}(1)$, which is exactly $V_{n,r}(a)$. 
This, together with Lemma \ref{lem:actionT-r}, Theorem \ref{th:tak}, and \eqref{eq:dimwrlambdaxi}, gives that 
\[
\dim W_{a,\xi}=\dim W_{1,\xi}=\dim T_r(W_\xi)= \dim W_r(\lambda(\xi))=\prod_{i=1}^{n-1}\binom{n}{i}^{\xi_i-\xi_{i+1}}.
\]
Therefore, we complete the proof. \qed


\begin{thebibliography}{AAGBP}
\bibitem[BN]{BN} J. Beck and H. Nakajima, Crystal bases and two-sided cells of quantum affine algebras, Duke Math. J.
123 (2004), 335402.
\bibitem[CLS]{CLS}
L. Calixto, J. Lemay, and A. Savage, Weyl modules for Lie superalgebras, Proc. Amer. Math. Soc. 147 (2019),
3191–3207.
\bibitem[CFK]{CFK}
V. Chari, G. Fourier, and T. Khandai, A categorical approach to Weyl
modules, Transform. Groups 15 (2010), 517–549.
\bibitem[CFS]{CFS} 
 V. Chari, G. Fourier, and P. Senesi, Weyl modules for the twisted loop algebras, J. Algebra 319 (2008),
5016–5038.


 \bibitem[CP]{CP} V. Chari and A. Pressley,  Weyl modules for classical and quantum affine algebras, Represent. Theory 5 (2001), 191–223.
 \bibitem[CL]{CL} V. Chari and S. Loktev, Weyl, Demazure and fusion modules for the current algebra of $\mathfrak{sl}_{r+1}$, Adv. Math. 207 (2006), 928–960.
 \bibitem[CGT]{CGT}
 F. Chen, Y. Gao, and S. Tan, Realizations of $A_1^{(1)}$-modules in category $\widetilde{\mathcal{O}}$, Represent. Theory  27 (2023), 149–176.
 \bibitem[FM]{FM}
  E. Feigin and I. Makedonskyi, Generalized Weyl modules, alcove paths and Macdonald polynomials,
Selecta. Math. (N.S.) 23 (2017), 2863–2897.
\bibitem[FMO]{FMO}
E. Feigin, I. Makedonskyi, and D. Orr,
Generalized Weyl modules and nonsymmetric $q$-Whittaker functions,
Adv. Math. 330 (2018), 997–1033.
 \bibitem[FL]{FL}
 G. Fourier and P. Littelmann, Weyl modules, Demazure modules, KR-modules, crystals, fusion products and limit
constructions, Adv. Math. 211 (2007), 566–593.
\bibitem[FMM]{FMM}
 G. Fourier, V. Martins, and A. Moura, On truncated Weyl modules, Commun. Algebra 47 (2019), 1125–1146.
 \bibitem[H1]{H1}
 R. Howe, $\theta$-series and invariant theory, in: Proceedings of Symposium on Pure Mathematics, vol. 33,
AMS, Providence, (1979), 275–285.
\bibitem[H2]{H2}
R. Howe, Perspectives on invariant theory: Schur duality, multiplicity-free actions and
beyond, Israel Math. Conf. Proc. 8 (1995), 1–182.
\bibitem[K]{K} M. Kashiwara, Crystal bases of modified quantized enveloping algebra, Duke Math. J. 73 (1994), 383413.
 \bibitem[L]{L}
 M. Lau, Takiff algebras, Toda systems, and jet transformations, J. Algebra  655 (2024), 619-650.
\bibitem[MPS]{MPS}
S. Mukherjee, S. K. Pattanayak, and S.S. Sharma. Weyl
modules for toroidal Lie algebras, Algebr. Represent. Theory 26 (2023), 2605–2626.
\bibitem[N]{N}
K. Naoi, Weyl modules, Demazure modules and finite crystals for non-simply laced type, Adv.
Math. 229 (2012), 875–934.
\end{thebibliography}
\end{document}